\documentclass[11pt]{article} 
\usepackage[
  shownumpages,               
  bgcolor={245,245,250},      
  braincolor={190,32,240},    
  citingstyle=authoryear,     
]{brainlab}


\usepackage{amsmath,amsfonts,bm}









\def\eqref#1{equation~\ref{#1}}









\def\1{\bm{1}}










\DeclareMathAlphabet{\mathsfit}{\encodingdefault}{\sfdefault}{m}{sl}
\SetMathAlphabet{\mathsfit}{bold}{\encodingdefault}{\sfdefault}{bx}{n}











\newcommand{\E}{\mathbb{E}}

\newcommand{\R}{\mathbb{R}}



\usepackage{subfigure}
\usepackage{hyperref}
\usepackage{url}
\usepackage{amsthm}
\usepackage{wrapfig}

\setbrainmeta{
  title={Accelerated Methods with Complexity Separation Under Data Similarity for Federated Learning Problems},
  authors={
    Dmitry Bylinkin\textsuperscript{1,2}, Sergey Skorik\textsuperscript{1}, Dmitriy Bystrov\textsuperscript{2}, Leonid Berezin\textsuperscript{1}, Aram Avetisyan\textsuperscript{1}, Aleksandr Beznosikov\textsuperscript{1,2}
  },
  affiliations={
    \textsuperscript{1}Federated Learning Problems Laboratory \\
    \textsuperscript{2}Basic Research of Artificial Intelligence Laboratory (BRAIn Lab)
  },
  abstract={
    Heterogeneity within data distribution poses a challenge in many modern federated learning tasks. We formalize it as an optimization problem involving a computationally heavy composite under data similarity. By employing different sets of assumptions, we present several approaches to develop communication-efficient methods. An optimal algorithm is proposed for the convex case. The constructed theory is validated through a series of experiments across various problems. 
  }
}

%

\usepackage{nicefrac}
\usepackage{algorithm}
\usepackage{algpseudocode}
\algrenewcommand{\algorithmiccomment}[1]{\textcolor{gray}{\footnotesize\itshape // #1}}
\usepackage{amssymb}

\newtheorem{lemma}{Lemma}
\newtheorem{remark}{Remark}


\usepackage{graphicx} 
\usepackage{subcaption}

\begin{document}
\begin{mainpart}
\setlength{\belowdisplayskip}{3pt} \setlength{\belowdisplayshortskip}{1pt}
\setlength{\abovedisplayskip}{3pt} \setlength{\abovedisplayshortskip}{1pt}
\setlength{\parskip}{2.8pt}

\section{Introduction}
Currently, the field of optimization theory is well-developed. It includes a wide range of algorithms and techniques designed to efficiently solve various tasks. In today's landscape, engineers often have to handle large-scale data. It can be spread across multiple nodes/clients/devices/machines to share the load by working in parallel \citep{verbraeken2020survey}. The problem can be formally written as
\begin{align}\label{prob_form}
    \begin{split}
    \min_{x\in\R^d}&\left[h(x) =\frac{1}{\left|M_h\right|}\sum_{m\in M_h}h_m(x)\right],~\text{with}~ h_m(x)=\frac{1}{n_m} \sum_{j=1}^{n_m} \ell(x, z_j^m),
    \end{split}
\end{align}
where $n_m$ is the size of the $m$-th local dataset, $x$ is the vector of model parameters, $z_j^m$ is the $j$-th data point of the $m$-th dataset, and $\ell$ is the loss function. The most computationally powerful device ($h_1$, without loss of generality) is treated as a server, while the others communicate through it.

The primary challenge that must be addressed in this paradigm is a communication bottleneck \citep{jordan2019communication}. Deep models are often extremely large, and excessive information exchange can negate the acceleration provided by computational parallelism \citep{kairouz2021advances}. A potential solution to reduce the frequency of communication is to exploit the similarity of local data. There are several ways to measure this phenomenon. The most mathematically solid one typically employs the Hessians.
\begin{definition}{\textbf{(Hessian similarity).}}\label{sim_def}
    We say that $h_i$, $h_j$ are $\delta$-related, if there exists a constant $\delta>0$ such that
    \begin{align*}
        \lVert \nabla^2 h_i(x) - \nabla^2 h_j(x) \rVert \leq \delta, \quad \forall x\in\R^d.
    \end{align*}
\end{definition} 
Many papers assume the relatedness of every $h_m$ and $h$ \citep{lin2024stochastic, jiang2024stabilized}, but we rely only on $h_1$ and $h$, as in \citep{shamir2014communication, hendrikx2020statistically, kovalev2022optimal}. If we consider $h$ to be $L$-smooth, the losses exhibit greater statistical similarity with the growth of the local dataset size $n$. Measure concentration theory implies $\delta\thicksim\nicefrac{L}{n}$ and $\delta\thicksim\nicefrac{L}{\sqrt{n}}$ in the quadratic and general cases, respectively \citep{hendrikx2020statistically}. In distributed learning, where samples are shared between machines manually, it is easy to produce a homogeneous distribution. As a result, schemes using $\delta$-relatedness strongly outperform their competitors.

However, new settings entail new challenges. 
Federated learning \citep{zhang2021survey} requires working with private data that is collected locally by clients and may therefore be heterogeneous. As a result, similarity-based methods lose quality (see Table 3 in \citep{karimireddy2020scaffold}). We argue that this issue can be addressed to some extent. In practice, some distribution modes are common and shared uniformly between the server and the clients, while others are unique and primarily contained on the devices. In that case, one part of the data is better approximated by the server than the other. This suggests the idea that the objective can be represented as a sum of two functions, corresponding to frequent and rare data. The new problem can be formulated as
\begin{align}\label{prob_form_comp}
    \begin{split}
    \min_{x\in\R^d}&\left[h(x) = \frac{1}{\left|M_f\right|}\sum_{m\in M_f}f_m(x)+\frac{1}{\left|M_g\right|}\sum_{m\in M_g}g_m(x)\right],
    \end{split}
\end{align}
where $M_f$, $M_g$ denotes the set of devices sharing $f$, $g$, respectively, and $|\cdot|$ is the number of nodes in the corresponding set. $f$ and $g$ are the empirical losses corresponding to common and rare modes, respectively.
The problem \ref{prob_form_comp} exhibits a composite structure. It consists of components that are distinct from one another, including in terms of similarity. For the most typical samples, the server ($h_1=f_1+g_1$) may possess more extensive information, while for unique instances, there may be no ability to reproduce them accurately. In this context, we propose to examine two characteristics of relatedness simultaneously:
    \begin{align*}
        &\lVert \nabla^2 f_1(x) - \nabla^2 f(x) \rVert \leq \delta_f, \quad \forall x\in\R^d;\\&\lVert \nabla^2 g_1(x) - \nabla^2 g(x) \rVert \leq \delta_g, \quad \forall x\in\R^d.
    \end{align*}
Without loss of generality, we assume $\delta_f<\delta_g$. Due to the additional structure of the objective, it is possible to call $f$ (and hence the devices from $M_f$) less frequently. Thus, communication bottleneck can be addressed more effectively than SOTA approaches suggest. Indeed, the complexity of existing schemes depends on $\max\{\delta_f,\delta_g\}=\delta_g$ \citep{hendrikx2020statistically, kovalev2022optimal, beznosikov2024similarity, lin2024stochastic, bylinkin2024accelerated}. This indicates that no account is taken of the fact that certain data modes are better distributed between the server and the clients than others. This presents a unique challenge that requires the development of new schemes. Our paper answers the question: \begin{center}\textit{How to bridge the gap between separating the complexities in the problem (2) and the Hessian similarity?}\end{center}

\section{Related Works}
\subsection{Composite Optimization}
Classic works on numerical methods considered minimizing $h$ without assuming any additional structure \citep{polyak1987introduction}. Influenced by the development of machine learning, a composite setting with $h(x)=f(x)+g(x)$ as an objective has emerged. This focused on proximal friendly $g$ (regularizer) \citep{parikh2014proximal}. This means that any optimization problem over $g$ is easy to solve since its value and gradient are "free" to compute. However, many practical tasks do not satisfy this property. Consequently, the community has shifted towards analyzing more specific scenarios, leading to the emergence of the heavy composite setting. \citet{juditsky2011solving} and \citet{lan2012optimal} studied convex smooth$+$non-smooth problems but were unable to separate the complexities. The result was $\mathcal{O}\left(\sqrt{\nicefrac{L_f}{\varepsilon}}+\nicefrac{L_g^2}{\varepsilon^2}\right)$. It cannot be improved if only the first-order information of $f+g$ is accessible. However, it is reasonable to expect that the number of $\nabla f$ evaluations can be bounded by $\mathcal{O}\left(\sqrt{\nicefrac{L_f}{\varepsilon}}\right)$ if the non-smooth term $g$ is absent. This suggests that the estimate can be enhanced if there is separate access to the first-order information of $f$ and $g$. A step in this direction was taken with the invention of gradient sliding in \citep{lan2016gradient}. For convex $f$ and $g$, the author managed to obtain $\mathcal{O}\left(\sqrt{\nicefrac{L_f}{\varepsilon}}\right)$ and $\mathcal{O}\left(\sqrt{\nicefrac{L_f}{\varepsilon}}+\nicefrac{L_g^2}{\varepsilon^2}\right)$ of $\nabla f$ and $g'\in\partial g$ evaluations, respectively. Later, the exact separation was achieved for convex smooth$+$smooth problems in \cite{lan2016accelerated}. The proposed method achieved $\mathcal{O}\left(\sqrt{\nicefrac{L_f}{\varepsilon}}\right)$ and $\mathcal{O}\left(\sqrt{\nicefrac{L_g}{\varepsilon}}\right)$. For strongly convex $f$, $g$, the result was $\mathcal{O}\left(\sqrt{\nicefrac{L_f}{\mu}}\log\nicefrac{1}{\varepsilon}\right)$ and $\mathcal{O}\left(\sqrt{\nicefrac{L_g}{\mu}}\log\nicefrac{1}{\varepsilon}\right)$.

At present, various exotic sliding-based schemes exist: for VIs \citep{lan2021mirror, emelyanov2024extragradient}, saddle points \citep{tominin2021accelerated, kuruzov2022mirror, borodich2023optimal}, zero-order optimization problems \citep{beznosikov2020derivative, stepanov2021one, ivanova2022oracle}, and high-order minimization \citep{kamzolov2020optimal, gasnikov2021accelerated, grapiglia2023adaptive}.

Based on the above literature review, it can be concluded that the concept of complexity separation is well established. Moreover, the sliding approach is utilized to design communication-efficient algorithms based on similarity. The following subsection is dedicated to this topic.

\subsection{Similarity}
The essence of most techniques for handling the Hessian similarity lies in artificially dividing the objective into two components:
\begin{align*}
    h(x)=(h-h_1)(x)+h_1(x),
\end{align*} 
where $h-h_1$ is $\delta$-smooth. Unfortunately, previous gradient sliding methods cannot be easily adapted to this setting, as classic works assume the convexity of both components. This presents a challenge in developing a theory for the \textit{convex$+$non-convex$=$convex} case. 

The first approach addressing similarity was the Newton-type method, \texttt{DANE}, designed for quadratic strongly convex functions \citep{shamir2014communication}. For this class of problems, \citet{arjevani2015communication} established a lower bound on the required number of communication rounds. However, \texttt{DANE} failed to achieve it, prompting the question of how to bridge the gap. Numerous papers explored this issue but either fell short of meeting the exact bound or required specific cases and unnatural assumptions \citep{zhang2015disco, lu2018relatively, yuan2020convergence, beznosikov2021distributed, tian2022acceleration}. Recently, \texttt{Accelerated ExtraGradient}, achieving optimal round complexity, was introduced by \citet{kovalev2022optimal}. 

The current trend in this area is to combine similarity with other approaches. This is often non-trivial and demands the development of new techniques. In constructing a scheme with local steps, the theory of the $\delta$-relatedness was first utilized by \citet{karimireddy2020scaffold}. The proposed method experienced acceleration due to the similarity of local data, but only for quadratic losses. This result has recently been revisited and significantly improved in \citep{luo2025revisiting}. 

\citet{khaled2022faster} attempted to utilize client sampling in the similarity scenario. However, the analysis of the proposed scheme requires strong convexity of each local function, which sufficiently narrows the class of problems. Moreover, the authors used \texttt{CATALYST} \citep{lin2015universal} to accelerate the method, which resulted in extra logarithm multiplier in the complexity and experimental instability. This issue was addressed in \citep{lin2024stochastic}. \texttt{AccSVRS} achieved an optimal number of client-server communications.

Combining compression and similarity is also widespread in research papers. One of the first results in this area was obtained by \citet{beznosikov2022compression}. The authors proposed schemes utilizing both unbiased and biased compression. However, the complexity includes a term that depends on the Lipschitz constant of the objective's gradient. This issue was addressed in \citep{beznosikov2024similarity}, but only for the permutation compression operator. Recently, similarity and compression (both unbiased and biased) have been combined in an accelerated method designed by \citet{bylinkin2024accelerated}.

\textbf{Similarity + composite structure of the objective} represents an interesting challenge that has not been addressed.

\section{Our Contribution}
We analyze the problem \ref{prob_form_comp} under the Hessian similarity condition. This paper presents several efficient methods for various sets of assumptions.\\
$\bullet$ Firstly, we consider the setting of strongly convex $h$ and possibly non-convex $f$, $g$. Starting with a naive stochastic approach, we construct a method with exact separation of complexities: $\mathcal{O}\left(\sqrt{\nicefrac{\delta_f}{\mu}}\log\nicefrac{1}{\varepsilon}+\nicefrac{\sigma^2}{\mu\varepsilon}\right)$ and $\mathcal{O}\left(\sqrt{\nicefrac{\delta_g}{\mu}}\log\nicefrac{1}{\varepsilon}+\nicefrac{\sigma^2}{\mu\varepsilon}\right)$ communication rounds for the nodes from $M_f$ and $M_g$, respectively. It is not optimal because of sublinear terms in the estimates. To address this issue, we develop the variance reduction theory for the problem \ref{prob_form_comp}. Overcoming several challenges, we present \texttt{\textbf{V}ariance \textbf{R}eduction for \textbf{C}omposite under \textbf{S}imilarity} (\texttt{VRCS}) that achieves $\mathcal{O}\left( \nicefrac{\delta_f}{\mu}\log\nicefrac{1}{\varepsilon} \right)$ and $\mathcal{O}\left( (\nicefrac{\delta_g}{\delta_f})\nicefrac{\delta_g}{\mu}\log\nicefrac{1}{\varepsilon} \right)$. Its accelerated version \texttt{AccVRCS} enjoys $\mathcal{O}\left( \sqrt{\nicefrac{\delta_f}{\mu}}\log\nicefrac{1}{\varepsilon} \right)$ and $\mathcal{O}\left( (\nicefrac{\delta_g}{\delta_f})^{\nicefrac{3}{2}}\sqrt{\nicefrac{\delta_g}{\mu}}\log\nicefrac{1}{\varepsilon} \right)$. In summary, we manage to achieve complexity separation with the optimal estimate for $M_f$ and the extra factor $(\nicefrac{\delta_g}{\delta_f})^{\nicefrac{3}{2}}$ for $M_g$. To make both complexities optimal, we have to impose requirements on $g$, see the following paragraph.\\
$\bullet$ Under the additional assumption of $g$ convexity, we propose an approach based on \texttt{Accelerated Extragradient}. Our method enjoys separated communication complexities. It achieves optimal $\mathcal{O}\left(\sqrt{\nicefrac{\delta_f}{\mu}}\log\nicefrac{1}{\varepsilon}\right)$, $\tilde{\mathcal{O}}\left(\sqrt{\nicefrac{\delta_g}{\mu}}\log\nicefrac{1}{\varepsilon}\right)$ for $M_f$, $M_g$, respectively.\\
$\bullet$ We validate our theory through experiments across a diverse set of tasks. Specifically, we evaluate the performance of a \textit{Multilayer Perceptron (MLP)} on the \textit{MNIST} dataset and \textit{ResNet-18} on \textit{CIFAR-10}.

\section{Setting}
\subsection{Notation}
We assume that the devices and their communication channels are equivalent if they belong to the same set of nodes ($M_f$ or $M_g$). In a synchronous setup, analyzing the complexity in terms of communication rounds for $M_f$ and $M_g$ separately is sufficient. The number of times the server initiates communication is considered in this case. This approach does not take into account the number of involved machines and is well-suited for networks with synchronized nodes of two types. When discussing our results, we also utilize the number of communications. This metric counts each client-server vector exchange as a separate unit of complexity and is more appropriate for the asynchronous case. 
\subsection{Assumptions}
The first part of our work relies solely on standard assumptions, leaving $f$, $g$ arbitrary:
\begin{assumption}\label{ass:h}
    $h\colon \mathbb{R}^d \rightarrow \mathbb{R}$ is $\mu$-strongly convex on $\mathbb{R}^d$:
    \begin{align}\label{def:strong_convexity}
        h(x)\geq h(y)+\langle\nabla h(y),x-y\rangle + \frac{\mu}{2}&\|x-y\|^2,\quad\forall x,y\in\R^d.
    \end{align}
\end{assumption}
\begin{assumption}\label{ass:relatedness}
    $f_1$ is $\delta_f$-related to $f$, and $g_1$ is $\delta_g$-related to $g$ (Definition \ref{sim_def}). We assume $\mu<\delta_f<\delta_g$. 
\end{assumption}
Strong convexity of the objective (with arbitrary $f$ and $g$) is the common assumption. No paper on data similarity is void of it \citep{hendrikx2020statistically, kovalev2022optimal, beznosikov2024similarity, lin2024stochastic, bylinkin2024accelerated}. The $\delta$-relatedness does not diminish the generality of our analysis, as in the case of absolutely heterogeneous data, it suffices to substitute $\delta_f=L_f$ and $\delta_g=L_g$ in the results.

Further, we strengthen the setting by assuming $g$ to be convex (only in Section \ref{sec:determ}):
\begin{assumption}\label{ass:g_strong}
    $g\colon \mathbb{R}^d \rightarrow \mathbb{R}$ is convex ($\mu=0$) on $\mathbb{R}^d$.
\end{assumption}
This allows us to obtain optimal estimates for communication rounds over $M_f$ and $M_g$ simultaneously.

\section{Complexity Separation via SGD}
To construct a theory suitable for applications, we should avoid introducing excessive requirements. Firstly, we analyze the problem (2) without imposing additional conditions on $f$, $g$.

We begin with a naive SGD-like approach \citep{robbins1951stochastic}.
\begin{algorithm}{\texttt{SC-AccExtragradient}}
    \label{alg:stoch_extragrad}
    \begin{algorithmic}[1]
        \State {\bf Input:} $x_0=\overline{x}_0 \in \R^d$
        \State {\bf Parameters:} $\tau\in(0,1),~ \eta, \theta, \alpha, p, K>0$
        \For{$k=0,1,\ldots,K-1$}
            \State $\underline{x}_k=\tau x_k+(1-\tau)\overline{x}_k$\label{eq:stoch_extragrad_combine}
            \State $\xi_k = \begin{cases}
                \frac{1}{p}\nabla(f-f_1)(\underline{x}_k),\text{ with probability $p$}\\
                \frac{1}{1-p}\nabla(g-g_1)(\underline{x}_k),\text{ with probability $1-p$}\\ \label{eq:alg1_choice}
            \end{cases}$\label{eq:stoch_extragrad_estimator_1}
            \State $\overline{x}_{k+1}\approx\arg\min_{x\in\R^d}\left[A_{\theta}^k(x)=\langle \xi_k,x\rangle+\frac{1}{2\theta}\|x-\underline{x}_k\|^2+h_1(x)\right]$\label{eq:stoch_extragrad_subtask}
            \State $\zeta_k = \begin{cases}
                \frac{1}{p}\nabla f(\overline{x}_{k+1}),\text{ with probability $p$}\\
                \frac{1}{1-p}\nabla g(\overline{x}_{k+1}),\text{ with probability $1-p$}
            \end{cases}$\label{eq:stoch_extragrad_estimator_2}
            \State $x_{k+1} = x_k + \eta\alpha(\overline{x}_{k+1}-x_k)-\eta\zeta_k$\label{eq:stoch_extragrad_step}
        \EndFor
        \State {\bf Output:} $x_K$
    \end{algorithmic}
\end{algorithm}
In Line \ref{eq:alg1_choice} of Algorithm \ref{alg:stoch_extragrad}, we propose selecting which part of the nodes ($M_f$ or $M_g$) to communicate with at each iteration. Moreover, we aim to perform sampling based on the similarity constants rather than uniformly. To maintain an unbiased estimator, we normalize it by the probability of choice (Line \ref{eq:stoch_extragrad_estimator_1}). Additionally, we apply the same scheme in Line \ref{eq:stoch_extragrad_estimator_2}. Thus, each round of communication involves clients from either $M_f$ or $M_g$.

As previously stated, the stochastic oracles $\xi_k$ and $\zeta_k$ are unbiased. As usual in \texttt{SGD}-like algorithms, we impose a variance boundedness assumption to prove the convergence of \texttt{SC-AccExtragradient} (Algorithm \ref{alg:stoch_extragrad}).
\begin{assumption}\label{ass:variances}
    The stochastic oracles $\xi_k$, $\zeta_k$ have bounded variances:
    \begin{align*}
        &\E_{\xi_k}\left[ \|\xi_k-\nabla(h-h_1)(\underline{x}_k)\|^2 \right]\leq\sigma^2,\quad\E_{\zeta_k}\left[ \|\zeta_k-\nabla h(\overline{x}_{k+1})\|^2 \right]\leq\sigma^2.
    \end{align*}
\end{assumption}
This approach enables the separation of complexities without introducing assumptions regarding the composite.
\begin{theorem}\label{th:stoch_extragrad}
    Consider Algorithm \ref{alg:stoch_extragrad} for the problem \ref{prob_form_comp} under Assumptions \ref{ass:h}-\ref{ass:variances}. Let the subproblem in Line \ref{eq:stoch_extragrad_subtask} be solved approximately:
    \begin{align}\label{eq:stoch_extragrad_criterion}
        \E\left[ \|\nabla A_{\theta}^k(\overline{x}_{k+1})\|^2 \right] \leq \E\left[ \frac{1}{11\theta^2}\|\underline{x}_k-\arg\min_{x\in\R^d}A_{\theta}^k(x)\|^2 \right].
    \end{align}
    Then the complexities in terms of communication rounds are
    \begin{align*}
        \mathcal{O}\left(\sqrt{\frac{\delta_f}{\mu}}\log\frac{1}{\varepsilon}+\frac{\sigma^2}{\mu\varepsilon}\right),\quad\mathcal{O}\left(\sqrt{\frac{\delta_g}{\mu}}\log\frac{1}{\varepsilon}+\frac{\sigma^2}{\mu\varepsilon}\right)
    \end{align*}
    for the nodes from $M_f$, $M_g$ respectively.
\end{theorem}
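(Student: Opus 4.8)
The plan is to recognize Algorithm~\ref{alg:stoch_extragrad} as a stochastic, inexact instance of the Accelerated ExtraGradient scheme of \citet{kovalev2022optimal}, applied to the splitting $h = \psi + h_1$ with $\psi \defeq h - h_1 = (f-f_1)+(g-g_1)$. By Assumption~\ref{ass:relatedness} the gradient $\nabla\psi$ is Lipschitz with constant $\bar\delta \le \delta_f + \delta_g \le 2\delta_g$, while $h$ is $\mu$-strongly convex by Assumption~\ref{ass:h}; crucially $\psi$ need not be convex (the \emph{convex$+$nonconvex$=$convex} regime), which is exactly what the extragradient correction is built to handle. The estimators are unbiased, $\E[\xi_k]=\nabla\psi(\underline x_k)$ and $\E[\zeta_k]=\nabla h(\overline x_{k+1})$, with second moments controlled by $\sigma^2$ via Assumption~\ref{ass:variances}. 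I would therefore first prove a convergence bound phrased in a single iteration count $K$, the effective smoothness $\bar\delta$, and $\sigma^2$, and only afterward convert $K$ into per-group communication counts through the sampling probability $p$.

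For the convergence bound I would track a Lyapunov function of the form $\Psi_k = \|x_k - x^\star\|^2 + c\,(\text{a gap/potential term in } \overline x_k)$, mirroring the deterministic potential of \citet{kovalev2022optimal} but taking conditional expectations at each step. The target one-step estimate reads $\E[\Psi_{k+1}\mid\mathcal F_k] \le \bigl(1 - \Omega(\sqrt{\mu/\bar\delta})\bigr)\,\Psi_k + \eta^2\,\mathcal{O}(\sigma^2)$, where the contraction comes from strong convexity together with the extragradient cancellation of the $\nabla\psi$ terms, and the additive noise comes from the variances of $\xi_k,\zeta_k$. Two modifications to the deterministic argument are needed: (i) the exact inner minimizer is replaced by the inexact $\overline x_{k+1}$, so I would feed in the criterion~(\ref{eq:stoch_extragrad_criterion}) — which controls $\|\nabla A_\theta^k(\overline x_{k+1})\|^2$ by $\tfrac{1}{11\theta^2}\|\underline x_k - \arg\min_x A_\theta^k(x)\|^2$ in expectation — to absorb the inexactness with a small constant loss (the factor $11$ being chosen precisely so the perturbation stays dominated); and (ii) every inner product involving $\xi_k - \nabla\psi(\underline x_k)$ or $\zeta_k - \nabla h(\overline x_{k+1})$ is peeled off by Young's inequality and bounded in expectation by $\sigma^2$. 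Unrolling with $\theta \asymp 1/\bar\delta$ and the remaining parameters $\eta,\alpha,\tau$ tuned as in the accelerated scheme yields $K = \mathcal{O}\bigl(\sqrt{\delta_g/\mu}\,\log\tfrac1\varepsilon + \tfrac{\sigma^2}{\mu\varepsilon}\bigr)$ to reach $\E\|x_K-x^\star\|^2 \le \varepsilon$.

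Finally I would translate iterations into communication rounds. Each iteration issues one oracle call for $\xi_k$ and one for $\zeta_k$, and each call contacts $M_f$ with probability $p$ and $M_g$ with probability $1-p$, so the expected numbers of rounds are $\Theta(pK)$ and $\Theta((1-p)K)$. Choosing $p = \sqrt{\delta_f}/(\sqrt{\delta_f}+\sqrt{\delta_g})$ splits the deterministic part as $pK = \mathcal{O}(\sqrt{\delta_f/\mu}\,\log\tfrac1\varepsilon)$ and $(1-p)K = \mathcal{O}(\sqrt{\delta_g/\mu}\,\log\tfrac1\varepsilon)$, while the sublinear part $\tfrac{\sigma^2}{\mu\varepsilon}$ is merely scaled by $p\le 1$ and $1-p\le 1$ and therefore survives as $\mathcal{O}(\tfrac{\sigma^2}{\mu\varepsilon})$ in each group — exactly the claimed complexities.

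The hard part will be steps (i)--(ii) in combination: showing that the inexact extragradient correction still cancels the nonconvex $\nabla\psi$ contributions when the inner gradient is simultaneously noisy and only approximately zero. In the deterministic exact case this cancellation is precisely what upgrades mere $\delta$-smoothness into the accelerated $\sqrt{\delta/\mu}$ rate, and it is delicate because $\psi$ is not convex; I expect the bulk of the effort to lie in tuning $\eta,\theta,\alpha,\tau$ so that the errors from~(\ref{eq:stoch_extragrad_criterion}) and from the $\sigma^2$ terms enter the recursion with strictly smaller coefficients than the contraction they must not destroy.
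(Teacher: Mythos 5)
Your plan follows essentially the same route as the paper's own proof: a direct potential-function analysis of the stochastic, inexact accelerated extragradient (a Lyapunov function combining $\|x_k-x_*\|^2$ with the gap $h(\overline{x}_k)-h(x_*)$; unbiasedness plus Assumption \ref{ass:variances} to peel the noise off via Young-type inequalities; the criterion \eqref{eq:stoch_extragrad_criterion} combined with the $1/\theta$-strong convexity of $A_{\theta}^k$ to absorb the inexactness), followed by the same probabilistic accounting in which a $p$-fraction of rounds engages $M_f$ and a $(1-p)$-fraction engages $M_g$. Your split $p=\sqrt{\delta_f}/(\sqrt{\delta_f}+\sqrt{\delta_g})$ differs from the paper's $p=\delta_f/(\delta_f+\delta_g)$, but both choices deliver the stated per-group bounds, so that is a harmless variation.

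The genuine gap is your final step: the claim that ``unrolling with $\theta\asymp 1/\bar\delta$'' already yields $K=\mathcal{O}\bigl(\sqrt{\delta_g/\mu}\log\tfrac{1}{\varepsilon}+\tfrac{\sigma^2}{\mu\varepsilon}\bigr)$ for \emph{arbitrary} $\varepsilon$. With all parameters held fixed, unrolling $\E[\Psi_{k+1}]\le(1-\rho)\E[\Psi_k]+\eta^2\mathcal{O}(\sigma^2)$ gives $\E[\Psi_K]\le(1-\rho)^K\Psi_0+\eta^2\mathcal{O}(\sigma^2)/\rho$, and the second term is a $K$-independent noise floor proportional to $\sigma^2$; no number of iterations takes you below it, so constant $\theta\asymp 1/\bar\delta$ cannot reach accuracies smaller than that floor. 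To reach arbitrary $\varepsilon$ one must let $\theta$ shrink with $\varepsilon$, and the naive choice $\theta=\min\{1/\bar\delta,\;\mathcal{O}(\mu\varepsilon^2/\sigma^4)\}$ pollutes the sublinear term with an extra $\log\tfrac{1}{\varepsilon}$ factor. This is exactly where the paper inserts its ``finer tuning of $\theta$'': it views the one-step inequality through the substitution $\gamma=\sqrt{\theta}$ (with $a=\sqrt{\mu}/2$, $c=4\sigma^2$, and $\gamma$ bounded above by a constant times $1/\sqrt{\delta_f+\delta_g}$) and invokes the horizon-dependent step-size result, Lemma 2 of \citet{stich2019unified}, which is what produces the clean additive $\sigma^2/(\mu\varepsilon)$ term in the statement. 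Your proposal needs this step (or an equivalent decreasing-parameter/restart argument) made explicit; the rest of the argument is sound.
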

See the proof in Appendix \ref{ap:A}.

\subsection{Discussion}
The naive stochastic approach yields a complexity separation without imposing requirements on the problem components. However, the estimates are not optimal and rely on an unnatural Assumption \ref{ass:variances}. The next step is to remove it by incorporating variance reduction into the proposed algorithm. This constitutes the primary theoretical challenge of our paper.

\section{Complexity Separation via Variance Reduction}
To achieve convergence without the sublinear terms, we require Assumptions \ref{ass:h}-\ref{ass:relatedness} only. We refer to \citep{lin2024stochastic} that successfully implemented variance reduction for the problem \ref{prob_form}. Their \texttt{SVRG}-like gradient estimator does not account for both similarity constants and does not allow for splitting the complexities. Following the logic, we propose to replace $h_1(x_k)$ by $h^{i_k}_1(x_k)-h^{i_k}_1(w_0)+h_1(w_0)$ (see Line \ref{eq:vrcs_estimator} of Algorithm \ref{alg:vrcs}). We also suggest to use sampling from Bernoulli distribution, same as in Algorithm \ref{alg:stoch_extragrad}.
\begin{algorithm}{\texttt{VRCS}$^{\texttt{1ep}}(p, q, \theta, x_0)$}
    \label{alg:vrcs}
    \begin{algorithmic}[1]
        \State {\bf Input:} $x_0\in\R^d$
        \State {\bf Parameters:} $p,q\in(0,1),~\theta>0$
        \State $T\sim\text{Geom}\left(q\right)$
        \For{$ t = 0, \ldots, T-1$}
        \State $i_t \sim \text{Be}(p)$
        \State $\xi_t=\begin{cases}
            \frac{1}{p}\nabla(f-f_1)(x_t)\text{ if $i_t=1$},\\
            \frac{1}{1-p}\nabla(g-g_1)(x_t)\text{ if $i_t=0$}
        \end{cases}$
        \State $\zeta_t=\begin{cases}
            \frac{1}{p}\nabla(f-f_1)(x_0)\text{ if $i_t=1$},\\
            \frac{1}{1-p}\nabla(g-g_1)(x_0)\text{ if $i_t=0$}
        \end{cases}$
        \State $e_t=\xi_t-\zeta_t+\nabla h(x_0)-\nabla h_1(x_0)$\label{eq:vrcs_estimator}
        \State $x_{t+1}\approx\arg\min_{x\in\R^d}\left[A_{\theta}^t(x)=\langle e_t,x \rangle +\frac{1}{2\theta}\|x-x_t\|^2+h_1(x)\right]$\label{eq:vrcs_subtask}
        \EndFor
        \State \textbf{Output:} $x_T$
    \end{algorithmic}
\end{algorithm}

By overcoming the technical challenges associated with selecting the appropriate geometry to separate the complexities, we derive the result.
\begin{theorem}\label{th:vrcs}
    Consider Algorithm \ref{alg:vrcs} for the problem \ref{prob_form_comp} under Assumptions \ref{ass:h}-\ref{ass:relatedness}. Let the subproblem in Line \ref{eq:vrcs_subtask} be solved approximately:
    \begin{align}\label{eq:vrcs_criterion}
        \E\left[ \|\nabla A_{\theta}^t(x_{t+1})\|^2 \right] \leq \E\left[ \frac{\mu}{17\theta}\|x_t-\arg\min_{x\in\R^d}A_{\theta}^t(x)\|^2 \right].
    \end{align}
    Then the complexities in terms of communication rounds are
    \begin{align*}
        \mathcal{O}\left(\frac{\delta_f}{\mu}\log\frac{1}{\varepsilon}\right),\quad\mathcal{O}\left(\left(\frac{\delta_g}{\delta_f}\right)\frac{\delta_g}{\mu}\log\frac{1}{\varepsilon}\right)
    \end{align*}
    for the nodes from $M_f$, $M_g$, respectively.
\end{theorem}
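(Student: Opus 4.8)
The plan is to analyze a single call of \texttt{VRCS}$^{\texttt{1ep}}$ as one ``epoch'' and show that it contracts the expected squared distance to the minimizer $x^*$ by a constant factor, $\E\|x_T-x^*\|^2\le\tfrac12\|x_0-x^*\|^2$; iterating this $N=\mathcal{O}(\log\tfrac1\varepsilon)$ times then produces the $\log\tfrac1\varepsilon$ dependence. Two facts drive the epoch analysis. First, writing $h=f+g$, the estimator is unbiased: averaging over the Bernoulli choice gives $\E_{i_t}[e_t]=\nabla(h-h_1)(x_t)$, because the $\tfrac1p,\tfrac1{1-p}$ weights cancel the sampling probabilities and the anchor terms $-\zeta_t+\nabla h(x_0)-\nabla h_1(x_0)$ telescope. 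Second, Hessian similarity makes $\nabla(f-f_1)$ and $\nabla(g-g_1)$ Lipschitz with constants $\delta_f,\delta_g$, so a direct second-moment computation yields the variance bound
\[
\E_{i_t}\!\left[\|e_t-\nabla(h-h_1)(x_t)\|^2\right]\le\left(\frac{\delta_f^2}{p}+\frac{\delta_g^2}{1-p}\right)\|x_t-x_0\|^2,
\]
the hallmark variance-reduction estimate that vanishes as the iterates concentrate near the anchor.

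For the one-step inequality I would start from the optimality of the (exact) subproblem, $e_t+\tfrac1\theta(x_{t+1}-x_t)+\nabla h_1(x_{t+1})=0$, take the inner product of $\tfrac1\theta(x_t-x_{t+1})$ with $x_{t+1}-x^*$, and expand via the three-point identity. The key regrouping is $e_t+\nabla h_1(x_{t+1})=\nabla h(x_{t+1})+\big(e_t-\nabla(h-h_1)(x_{t+1})\big)$: the first piece is handled by $\mu$-strong monotonicity of $\nabla h$ (using $\nabla h(x^*)=0$), while the second splits into the deterministic drift $\nabla(h-h_1)(x_t)-\nabla(h-h_1)(x_{t+1})$, controlled by the similarity-Lipschitz constant, and a mean-zero noise term handled by the variance bound. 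The inexactness criterion \eqref{eq:vrcs_criterion}, calibrated to $\mu/\theta$, is then used to absorb the error of the approximate solve into these terms. This should give a recursion of the form $\E\|x_{t+1}-x^*\|^2\le(1-\Omega(\mu\theta))\E\|x_t-x^*\|^2+O(\theta^2)\big(\tfrac{\delta_f^2}{p}+\tfrac{\delta_g^2}{1-p}\big)\E\|x_t-x_0\|^2$.

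To convert per-step progress into an epoch contraction I would use that $T\sim\mathrm{Geom}(q)$ is drawn independently, so that $\E[\beta^T]=q/(1-(1-q)\beta)$ and the noise term, integrated against the geometric law, telescopes; choosing $\theta$ and $q$ so that both the contraction factor and the accumulated variance floor fall below a constant yields $\E\|x_T-x^*\|^2\le\tfrac12\|x_0-x^*\|^2$ with expected epoch length $\E[T]\approx 1/q$. The complexity count then reads off directly: each epoch spends one full-gradient round at the anchor $x_0$ (touching both $M_f$ and $M_g$) plus, in expectation, $p\,\E[T]$ rounds with $M_f$ and $(1-p)\,\E[T]$ rounds with $M_g$. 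Setting the importance-sampling probability $p=\Theta(\delta_f^2/\delta_g^2)$ balances the two budgets in the ratio $\delta_f^2:\delta_g^2$, and with $q$ fixed at the contraction threshold this delivers $\mathcal{O}\big(\tfrac{\delta_f}{\mu}\log\tfrac1\varepsilon\big)$ rounds over $M_f$ and $\mathcal{O}\big(\tfrac{\delta_g}{\delta_f}\tfrac{\delta_g}{\mu}\log\tfrac1\varepsilon\big)$ over $M_g$.

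The main obstacle I anticipate is the \emph{convex$+$non-convex$=$convex} structure: since $f-f_1$, $g-g_1$, and $h_1$ need not be convex individually, no co-coercivity or function-value bound is available, and the entire contraction must be extracted from strong convexity of $h$ alone. This makes it delicate to admit a step size $\theta$ large enough to reach the \emph{optimal} $\delta_f/\mu$ rate for $M_f$ while simultaneously keeping the distance-based variance $\big(\tfrac{\delta_f^2}{p}+\tfrac{\delta_g^2}{1-p}\big)\|x_t-x_0\|^2$ under control across the geometric epoch; a naive Young's-inequality treatment of the drift term appears to lose exactly the $\delta_f/\mu$ factor that separates the claimed rate from the worse $\delta_f^2/\mu^2$. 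I expect the decisive step to be the choice of the right geometry — the weighting implicit in the sampling probability $p$ together with a Lyapunov function that pairs $\|x_t-x^*\|^2$ with the anchor displacement — so that the drift and variance are absorbed without shrinking $\theta$, which is precisely what lets the two communication budgets be separated.
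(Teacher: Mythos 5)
Your unbiasedness and variance computations, the role of the anchor, and the final accounting (one full round per epoch plus $p\,\E[T]$ rounds over $M_f$ and $(1-p)\,\E[T]$ over $M_g$) all match the paper. But the backbone of your plan --- a constant-factor contraction $\E\|x_T-x_*\|^2\le\tfrac12\|x_0-x_*\|^2$ per epoch, hence only $\mathcal{O}\left(\log\tfrac1\varepsilon\right)$ epochs --- cannot be pushed through, and the obstacle you name in your last paragraph is not a technicality but a structural dead end of this allocation. To contract by a constant factor over an epoch of expected length $\nicefrac1q$ you need $\mu\theta\gtrsim q$; to keep the accumulated variance under control (even with the geometric-law telescoping you invoke) you need $\theta^2\left(\frac{\delta_f^2}{p}+\frac{\delta_g^2}{1-p}\right)\lesssim q$. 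Together these force $q\lesssim\mu^2\Big/\left(\frac{\delta_f^2}{p}+\frac{\delta_g^2}{1-p}\right)$, and then the per-epoch budget over $M_f$ is $\frac{p}{q}\gtrsim\frac{1}{\mu^2}\left(\delta_f^2+\frac{p}{1-p}\delta_g^2\right)\geq\frac{\delta_f^2}{\mu^2}$ for \emph{every} choice of $p$. So your scheme yields at best $\mathcal{O}\left(\frac{\delta_f^2}{\mu^2}\log\frac1\varepsilon\right)$ rounds over $M_f$ --- exactly the $\nicefrac{\delta_f}{\mu}$ loss you flag --- and no choice of Lyapunov geometry inside the ``few long epochs'' paradigm removes it.

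The paper's proof escapes by dropping the constant-contraction requirement altogether. Its descent lemma (Lemma \ref{lem:descent}) bounds $\E[h(x_T)-h(x)]$ for an \emph{arbitrary} comparison point $x$ via Bregman divergences of $\psi(x)=h_1(x)-h(x)+\frac1{2\theta}\|x\|^2$; applying it twice per epoch (at $x=x_*$ and at the anchor $x=x_k$) and summing produces a Lyapunov function mixing the function gap $h(x_k)-h(x_*)$ with $D_\psi(x_*,x_k)$ that contracts only by $1-\Theta\left(\frac{\mu\theta}{q}\right)$ per epoch. This weak contraction lets one take $q$ large --- $p=q=\frac{\delta_f^2}{\delta_f^2+\delta_g^2}$, $\theta=\frac14\sqrt{\frac{p(1-p)q}{p\delta_g^2+(1-p)\delta_f^2}}$, so $\frac{q}{\mu\theta}=\Theta\left(\frac{\delta_f}{\mu}\right)$ --- giving $\tilde{\mathcal{O}}\left(\frac{\delta_f}{\mu}\right)$ short epochs, each costing $O(1)$ rounds over $M_f$ and $O\left(\frac{\delta_g^2}{\delta_f^2}\right)$ over $M_g$ in expectation. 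In short: many cheap epochs with a fresh anchor (large $q$, hence large admissible $\theta$), rather than few epochs with constant contraction; that reallocation, not a better geometry within your scheme, is what produces the claimed $\mathcal{O}\left(\frac{\delta_f}{\mu}\log\frac1\varepsilon\right)$ and $\mathcal{O}\left(\left(\frac{\delta_g}{\delta_f}\right)\frac{\delta_g}{\mu}\log\frac1\varepsilon\right)$.
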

See the proof in Appendix \ref{ap:B}. Due to the difference between $\delta_f$ and $\delta_g$, the number of communication rounds over $M_f$ is reduced. This effect is not “free”, since the complexity over $M_g$ is increased by the same number of times. 

Next, we utilize an interpolation framework inspired by \texttt{KatyushaX} \citep{allen2018katyusha} to develop an accelerated version of Algorithm \ref{alg:vrcs}.
\begin{algorithm}{\texttt{AccVRCS}}
        \label{alg:accvrcs}
	\begin{algorithmic}[1]
		\State {\bf Input:} $z_0=y_0 \in \R^d$
            \State {\bf Parameters:} $p,q,\tau\in(0,1),~ \theta,\alpha>0$
            \For{$k=0,1,2,\ldots, K-1$}
                \State $x_{k+1}=\tau z_k + (1-\tau)y_k$\label{eq:accvrcs_combine}
                \State $y_{k+1} =\texttt{VRCS}^{\texttt{1ep}}(p, q, \theta, x_{k+1})$
                \State $t_k=\nabla(h-h_1)(x_{k+1})-\nabla(h-h_1)(y_{k+1})$
                \State $G_{k+1}=q\left(t_k + \frac{x_{k+1}-y_{k+1}}{\theta}\right)$
                \State $z_{k+1}= \arg\min_{z\in\R^d}\left[\frac{1}{2\alpha}\|z-z_k\|^2 + \langle G_{k+1},z\rangle +\frac{\mu}{4}\|z-y_{k+1}\|^2\right]$\label{accvrcs:suproblem} 
            \EndFor
            \State{\bf Output:} $y_K$
	\end{algorithmic}
\end{algorithm}
Note that the subproblem appearing in Line \ref{accvrcs:suproblem} of Algorithm \ref{alg:accvrcs} can be solved analytically. Therefore, it does not require any additional heavy computations. We provide the convergence result for 
\texttt{AccVRCS} (Algorithm \ref{alg:accvrcs}).
\begin{theorem}\label{th:accvrcs}
    Consider Algorithm \ref{alg:accvrcs} for the problem \ref{prob_form_comp} under Assumptions \ref{ass:h}-\ref{ass:relatedness}.
    Then the complexities in terms of communication rounds are
    \begin{align*}
        \mathcal{O}\left(\sqrt{\frac{\delta_f}{\mu}}\log\frac{1}{\varepsilon}\right),\quad\mathcal{O}\left(\left(\frac{\delta_g}{\delta_f}\right)^{\nicefrac{3}{2}}\sqrt{\frac{\delta_g}{\mu}}\log\frac{1}{\varepsilon}\right)
    \end{align*}
    for the nodes from $M_f$, $M_g$, respectively.
\end{theorem}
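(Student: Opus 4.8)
The plan is to view one call of \texttt{VRCS}$^{\texttt{1ep}}$ (Algorithm \ref{alg:vrcs}) as a single stochastic ``gradient step'' for the outer loop and to wrap it in a Nesterov-type linear coupling, exactly the logic of \texttt{KatyushaX}. Concretely, I would first prove a one-epoch lemma describing the pair $(y_{k+1}, G_{k+1})$ produced from the anchor $x_{k+1}$. The vector $G_{k+1}=q\bigl(t_k+\tfrac{x_{k+1}-y_{k+1}}{\theta}\bigr)$ is engineered so that, taking expectations over the inner randomness, it behaves like $q\nabla h(y_{k+1})$: the term $(x_{k+1}-y_{k+1})/\theta$ is the prox--gradient mapping induced by the subproblem in Line \ref{eq:vrcs_subtask}, and $t_k=\nabla(h-h_1)(x_{k+1})-\nabla(h-h_1)(y_{k+1})$ is the correction that turns this mapping into a genuine gradient surrogate for $h=(h-h_1)+h_1$. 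The lemma should establish three facts: (i) a gradient-surrogate inequality $\langle \E G_{k+1}, x_{k+1}-u\rangle \ge q\bigl(\E h(y_{k+1})-h(u)\bigr)+\tfrac{q\mu}{2}\|x_{k+1}-u\|^2 - R_k$ valid for all $u\in\R^d$, (ii) a second-moment bound on $\E\|G_{k+1}\|^2$, and (iii) control of the remainder $R_k$ via the inexact-solve criterion \eqref{eq:vrcs_criterion}. Because $f,g$ are not assumed convex (only $h$ is, Assumption \ref{ass:h}), (i) cannot use convexity of the components and must route the estimates through the Hessian similarity of Definition \ref{sim_def}, bounding the increments of $\nabla(f-f_1)$ and $\nabla(g-g_1)$ by $\delta_f$ and $\delta_g$ respectively while extracting strong convexity only from the global $h$.

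The mechanism that produces the surrogate property is the geometric horizon $T\sim\mathrm{Geom}(q)$. I would use the standard identity $\E[a_T]-a_0=\tfrac{1-q}{q}\E[a_{T+1}-a_T]$ applied to the inner prox iterates, which converts the telescoped one-step progress of \texttt{VRCS} into the full-epoch change and explains the prefactor $q$ in $G_{k+1}$. The same identity, combined with the variance-reduced form of $e_t$ in Line \ref{eq:vrcs_estimator}, is what lets the second moment in (ii) split into an $f$-contribution weighted by the sampling probability $p$ and scaled by $\delta_f$, and a $g$-contribution weighted by $1-p$ and scaled by $\delta_g$. Obtaining this decomposition with the correct powers — rather than a single bound governed by $\max\{\delta_f,\delta_g\}=\delta_g$ — is precisely the ``appropriate geometry'' issue, and it is here that the step size $\theta$ and the probability $p$ must be tied to the two similarity constants.

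With the one-epoch lemma in hand, the outer analysis is a Lyapunov argument. Lines \ref{eq:accvrcs_combine} and \ref{accvrcs:suproblem} form a linear coupling: $x_{k+1}=\tau z_k+(1-\tau)y_k$ couples a mirror-descent sequence $z_k$ with the ``descent'' sequence $y_k$, and the update of $z_{k+1}$ is a prox step on the surrogate $G_{k+1}$ regularized by $\tfrac{\mu}{4}\|z-y_{k+1}\|^2$ (solvable in closed form, as noted). I would define a potential $\Psi_k=\tfrac1q(\E h(y_k)-h(x^*))+C\,\E\|z_k-x^*\|^2$ and combine the descent inequality from (i)--(iii), the three-point identity for the mirror step in Line \ref{accvrcs:suproblem}, and the coupling identity to obtain $\Psi_{k+1}\le(1-\Omega(\tau))\Psi_k$. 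Choosing $\tau\asymp\sqrt{\mu/\delta_f}$ (the accelerated rate dictated by the $f$-scale) together with compatible $\alpha,\theta$ then forces $K=\mathcal{O}(\sqrt{\delta_f/\mu}\log\tfrac1\varepsilon)$ outer iterations.

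Finally I would count communications. Each outer iteration costs $\mathcal{O}(1)$ full rounds to both node sets for $t_k$, plus an inner epoch of expected length $\tfrac{1-q}{q}$ that hits $M_f$ with probability $p$ and $M_g$ with probability $1-p$. Multiplying by $K$ and substituting the values of $p,q,\theta$ that were fixed to validate the second-moment split collapses the $M_f$ total to $\mathcal{O}(\sqrt{\delta_f/\mu}\log\tfrac1\varepsilon)$ and inflates the $M_g$ total to $\mathcal{O}((\delta_g/\delta_f)^{3/2}\sqrt{\delta_g/\mu}\log\tfrac1\varepsilon)$, the extra $(\delta_g/\delta_f)$ beyond a naive square-rooting of Theorem \ref{th:vrcs} arising from the sampling overhead that acceleration does not fully absorb. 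The main obstacle is step one: producing a second-moment bound on $G_{k+1}$ that genuinely separates $\delta_f$ and $\delta_g$ through $p$ while simultaneously keeping the gradient-surrogate inequality (i) intact under non-convex $f,g$ and the inexact subproblem solve — if the variance cannot be split, the $M_g$ term degrades to the unseparated $\sqrt{\delta_g/\mu}$ scaling on both node sets and the contribution is lost.
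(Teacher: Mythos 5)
Your proposal is correct and follows essentially the same route as the paper's proof in Appendix~\ref{ap:C}: a \texttt{KatyushaX}-style linear coupling wrapped around a one-epoch descent lemma for \texttt{VRCS}$^{\texttt{1ep}}$ (the paper's Lemma~\ref{lem:descent}, proved in the Bregman geometry of $\psi=h_1-h+\frac{1}{2\theta}\|\cdot\|^2$, which is exactly your ``gradient surrogate'' view of $G_{k+1}$ since $G_{k+1}=q(\nabla\psi(x_{k+1})-\nabla\psi(y_{k+1}))$), combined with the geometric-horizon identity (Lemma~\ref{lem:geom_property}), the \texttt{KatyushaX} mirror-step lemma (Lemma~\ref{lem:subtask}), and the identical round counting of $1+\nicefrac{p}{q}$ rounds over $M_f$ and $1+\nicefrac{(1-p)}{q}$ rounds over $M_g$ per outer iteration with $p=q=\nicefrac{\delta_f^2}{(\delta_f^2+\delta_g^2)}$. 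The parameter scalings you identify ($\tau\asymp\sqrt{\nicefrac{\mu}{\delta_f}}$, $K=\mathcal{O}(\sqrt{\nicefrac{\delta_f}{\mu}}\log\nicefrac{1}{\varepsilon})$, and the variance split $\nicefrac{\delta_f^2}{p}+\nicefrac{\delta_g^2}{(1-p)}$ as the crux) all match the paper's choices in \eqref{eq:accvrcs_choice}.
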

See the proof in Appendix \ref{ap:C}. As can be seen from Theorem \ref{th:accvrcs}, the acceleration has to be paid for by increasing the factor in one of the complexities to $(\nicefrac{\delta_g}{\delta_f})^{\nicefrac{3}{2}}$.
\subsection{Discussion}
Using only general assumptions, we construct the method that achieves the lower bound on the number of communication rounds across $M_f$. Without imposing additional conditions on the composites, achieving complexities independent of the $\nicefrac{\delta_g}{\delta_f}$ factor is not possible. Nevertheless, the proposed approach is notable for the presence of parameters $p$, $q$, which allow adjustment of the proportion between communication over $M_f$ and $M_g$. The next chapter addresses the reachability of exact complexity separation.

\section{Complexity Separation via Accelerated Extragradient}\label{sec:determ}
In this section, we move on to the more straightforward case, which requires $g$ to be "good" enough. This allows for the adaptation of an already existing technique to yield a satisfying result. For the problem \ref{prob_form}, the optimal communication complexity is achieved by \texttt{Accelerated Extragradient} \citep{kovalev2022optimal}.
\begin{algorithm}{\texttt{C-AccExtragradient}}
    \label{alg:comp_extragrad}
    \begin{algorithmic}[1]
        \State {\bf Input:} $x_0=\overline{x}_0 \in \R^d$
        \State {\bf Parameters:} $\tau\in(0,1),~ \eta, \theta, \alpha, K>0$
        \For{$k=0,1,\ldots,K-1$}
            \State $\underline{x}_k=\tau_f x_k+(1-\tau_f)\overline{x}_k$
            \State $\overline{x}_{k+1}\approx\arg\min_{x\in\R^d}\left[A_{\theta_f}^k(x)=\langle \nabla(f-f_1)(\underline{x}_k),x\rangle+\frac{1}{2\theta_f}\|x-\underline{x}_k\|^2+f_1(x)+g(x)\right]$\label{eq:compr_extra_subtask}
            \State $x_{k+1} = x_k + \eta_f\alpha_f(\overline{x}_{k+1}-x_k)-\eta_f\nabla h(\overline{x}_{k+1})$
        \EndFor
        \State {\bf Output:} $x_K$
    \end{algorithmic}
\end{algorithm}

In the first phase, it is proposed to use only the $\delta_f$-relatedness of $f$ and $f_1$, and to place $g$ in the subproblem (Line \ref{eq:compr_extra_subtask}). Thus, Algorithm \ref{alg:comp_extragrad} is a modified version of \texttt{Accelerated Extragradient}. To be consistent with the notation of the original paper, let 
\begin{align*}
    q(x)=f_1(x)+g(x),\quad p(x)=(f-f_1)(x).
\end{align*}
We have
\begin{align*}
    \|\nabla^2p(x)-\nabla^2p(y)\|\leq\delta_f,\quad\forall x,y\in\R^d.
\end{align*}
Moreover, Assumption \ref{ass:g_strong} guarantees the convexity of $q$. This allows us to apply Theorem 1 from \citep{kovalev2022optimal} with $\theta=\nicefrac{1}{\delta_f}$ and obtain $\mathcal{O}\left( \sqrt{\nicefrac{\delta_f}{\mu}}\log\nicefrac{1}{\varepsilon} \right)$ communication rounds over only $M_f$ to achieve an arbitrary $\varepsilon$-solution. To guarantee the convergence of Algorithm \ref{alg:comp_extragrad}, it is required to solve the subproblem in Line \ref{eq:compr_extra_subtask} with a certain accuracy:
\begin{align}\label{eq:subproblem_first}
    \left\| A_{\theta}^k(\overline{x}_{k+1}) \right\|^2\leq\frac{\delta_f^2}{3}\left\| \underline{x}_k-\arg\min_{x\in\R^d}A_{\theta}^k(x) \right\|^2.
\end{align}

Unlike the original paper, computing $A_{\theta}^k(x)$ requires communication. This necessitates finding an efficient method to solve the subproblem \ref{eq:subproblem_first}. We can rewrite it as
\begin{align*}
    A_{\theta}^k(x)=q_g(x)+p_g(x),
\end{align*}
where
\begin{align*}
    q_g(x)=&\langle\nabla\!(\!f\!-\!f_1\!)(
    \underline{x}_k),x\rangle\!+\!\frac{1}{2\theta_f}\!\|x\!-\!\underline{x}_k\|^2\!+\!f_1\!(x)\!+\!g_1(x),\quad p_g(x)=(g-g_1)(x).
\end{align*}
Working with $q_g$ does not require communication. This pertains to the gradient sliding technique and suggests that $A_{\theta}^k$ can be minimized by using \texttt{Accelerated Extragradient} once more.
\begin{algorithm}{\texttt{AccExtragradient for $A_{\theta}^k$}}
    \label{alg:extragrad}
    \begin{algorithmic}[1]
        \State {\bf Input:} $x_0=\overline{x}_0 \in \R^d$
        \State {\bf Parameters:} $\tau_g\in(0,1),~ \eta_g, \theta_g, \alpha_g, K>0$
        \For{$t=0,1,\ldots,T-1$}
            \State $\underline{x}_t=\tau_g x_t+(1-\tau_g)\overline{x}_t$\label{eq:extragrad_combine}
            \State $\overline{x}_{t+1}\approx\arg\min_{x\in\R^d}\left[B_{\theta_g}^t(x)=\langle \nabla(g-g_1)(\underline{x}_t),x \rangle+\frac{1}{2\theta_g}\|x-\underline{x}_t\|^2+q_g(x)\right]$
            \State $x_{t+1} = x_t + \eta_g\alpha_g(\overline{x}_{t+1}-x_t)-\eta_g\nabla A_{\theta}^k(\overline{x}_{t+1})$\label{eq:extragrad_step}
        \EndFor
        \State {\bf Output:} $\overline{x}_T$
    \end{algorithmic}
\end{algorithm}
We slightly modify the original proof and obtain linear convergence of Algorithm \ref{alg:extragrad} by the norm of the gradient. This is important since \eqref{eq:subproblem_first} requires exactly this criterion. We now combine the obtained results. We formulate this as a corollary.
\begin{theorem}\label{th:extragrad}
    Consider Algorithm \ref{alg:comp_extragrad} for the problem \ref{prob_form_comp} and Algorithm \ref{alg:extragrad} for its subproblem \ref{eq:subproblem_first}. Then the complexities in terms of communication rounds are
    \begin{align*}
        \mathcal{O}\left(\sqrt{\frac{\delta_f}{\mu}}\log\frac{1}{\varepsilon}\right),\quad \tilde{\mathcal{O}}\left(\sqrt{\frac{\delta_g}{\mu}}\log\frac{1}{\varepsilon}\right)
    \end{align*}
    for the nodes from $M_f$, $M_g$, respectively.
\end{theorem}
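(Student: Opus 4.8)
The plan is to derive Theorem~\ref{th:extragrad} from two nested applications of \texttt{Accelerated Extragradient} \citep{kovalev2022optimal}: the outer one on problem~\ref{prob_form_comp} via Algorithm~\ref{alg:comp_extragrad}, and the inner one on its subproblem~\ref{eq:compr_extra_subtask} via Algorithm~\ref{alg:extragrad}. I would then bound communication rounds over $M_f$ and $M_g$ separately, exploiting that the outer loop touches only $M_f$ (the vector $\nabla(f-f_1)(\underline{x}_k)$ is frozen during the inner solve and $f_1$ is server-local), whereas the inner loop touches only $M_g$.

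\textbf{Outer level.} First I would match Algorithm~\ref{alg:comp_extragrad} to the framework of \citep{kovalev2022optimal} with $p=f-f_1$ and $q=f_1+g$. Assumption~\ref{ass:relatedness} gives $\|\nabla^2 p(x)\|\le\delta_f$ for all $x$, so $p$ is the $\delta_f$-smooth component; Assumption~\ref{ass:h} makes $h=p+q$ $\mu$-strongly convex; and, as noted before the theorem, Assumption~\ref{ass:g_strong} makes $q$ convex. Taking $\theta_f=\nicefrac{1}{\delta_f}$, Theorem~1 of \citep{kovalev2022optimal} yields $\mathcal{O}(\sqrt{\nicefrac{\delta_f}{\mu}}\log\nicefrac{1}{\varepsilon})$ outer iterations to an $\varepsilon$-solution, conditioned on solving each subproblem to the relative accuracy~\ref{eq:subproblem_first}. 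Since one outer iteration queries only $\nabla(f-f_1)$ and $\nabla h$, i.e. a constant number of $M_f$ exchanges, this already gives the first claimed complexity.

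\textbf{Inner level.} The decisive observation is that subproblem~\ref{eq:compr_extra_subtask} is again a similarity problem, but better conditioned. Writing $A_{\theta}^k=q_g+p_g$ as in the text, $q_g$ is communication-free (it uses only $f_1$, $g_1$ and the frozen $\nabla(f-f_1)(\underline{x}_k)$) while $p_g=g-g_1$ is $\delta_g$-related. Crucially, with $\theta_f=\nicefrac{1}{\delta_f}$ and $q$ convex we get $\nabla^2 A_{\theta}^k=\delta_f I+\nabla^2 q\succeq\delta_f I$, so $A_{\theta}^k$ is $\delta_f$-strongly convex. Thus Algorithm~\ref{alg:extragrad}, run with $\theta_g=\nicefrac{1}{\delta_g}$, operates at condition number $\nicefrac{\delta_g}{\delta_f}$ rather than $\nicefrac{\delta_g}{\mu}$; one also verifies that its innermost subproblem $B_{\theta_g}^t$ is $\delta_f$-strongly convex and communication-free. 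Hence each outer iteration costs $\mathcal{O}(\sqrt{\nicefrac{\delta_g}{\delta_f}}\log\nicefrac{1}{\epsilon_{\mathrm{in}}})$ inner iterations, one $M_g$ exchange apiece.

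\textbf{Assembly and main obstacle.} Because criterion~\ref{eq:subproblem_first} is phrased in the gradient norm, the inner convergence must be certified in $\|\nabla A_{\theta}^k\|$ rather than in the function gap or iterate distance used by \citep{kovalev2022optimal}; I would obtain this by sandwiching $\|\nabla A_{\theta}^k(x)\|$ between multiples of $\|x-\arg\min_{x}A_{\theta}^k(x)\|$ via $\delta_f$-strong convexity and smoothness of $A_{\theta}^k$, at the cost of one extra logarithmic factor. Multiplying the two levels gives the $M_g$ budget $\mathcal{O}(\sqrt{\nicefrac{\delta_f}{\mu}}\log\nicefrac{1}{\varepsilon})\cdot\mathcal{O}(\sqrt{\nicefrac{\delta_g}{\delta_f}}\log(\cdot))=\tilde{\mathcal{O}}(\sqrt{\nicefrac{\delta_g}{\mu}}\log\nicefrac{1}{\varepsilon})$, the cancellation $\sqrt{\nicefrac{\delta_f}{\mu}}\cdot\sqrt{\nicefrac{\delta_g}{\delta_f}}=\sqrt{\nicefrac{\delta_g}{\mu}}$ being exactly what removes the spurious $\sqrt{\delta_f}$ and the leftover logarithms producing the $\tilde{\mathcal{O}}$. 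I expect the main difficulty to be precisely this coupling: proving gradient-norm linear convergence of the inner solver and propagating the relative accuracy~\ref{eq:subproblem_first} so that the inner condition number is genuinely $\nicefrac{\delta_g}{\delta_f}$ (which hinges on the convexity of $q$), thereby guaranteeing that the $M_g$ count telescopes to $\sqrt{\nicefrac{\delta_g}{\mu}}$ instead of degrading to the product $\nicefrac{\sqrt{\delta_f\delta_g}}{\mu}$.
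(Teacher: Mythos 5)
Your proposal is correct and follows essentially the same route as the paper: the same outer invocation of Theorem 1 of \citet{kovalev2022optimal} with $p=f-f_1$, $q=f_1+g$, $\theta_f=\nicefrac{1}{\delta_f}$ (giving $\mathcal{O}(\sqrt{\nicefrac{\delta_f}{\mu}}\log\nicefrac{1}{\varepsilon})$ rounds over $M_f$), and the same inner observation that the subproblem $A_{\theta}^k=q_g+p_g$ is itself a $\delta_f$-strongly convex similarity problem with constant $\delta_g$, solved by a nested \texttt{AccExtragradient} whose convergence must be certified in the gradient norm to match criterion \ref{eq:subproblem_first}, so that the per-level counts multiply as $\sqrt{\nicefrac{\delta_f}{\mu}}\cdot\sqrt{\nicefrac{\delta_g}{\delta_f}}=\sqrt{\nicefrac{\delta_g}{\mu}}$ up to logarithms. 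The only minor deviation is technical: the paper establishes the gradient-norm linear rate directly, by carrying $\|\nabla A_{\theta}^k(\overline{x}_t)\|^2$ inside the Lyapunov function of the inner analysis, whereas you propose to recover it a posteriori by sandwiching the gradient norm between multiples of the distance to the minimizer, which costs only an extra logarithmic factor and is equally valid.
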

See the proof in Appendix \ref{ap:D}.

\subsection{Discussion}
Assumption on $g$ convexity allows us to construct an approach that achieves suboptimal complexity over $M_f$ and $M_g$ simultaneously. As mentioned earlier, without considering heterogeneity within the data distribution, the optimal method is \texttt{Accelerated Extragradient}. Applied to our setting, it yields $\tilde{\mathcal{O}}\left(\sqrt{\nicefrac{(\delta_f+\delta_g)}{\mu}}\right)$ rounds over both $M_f$ and $M_g$. By complicating the structure of the problem and relying on real-world scenarios, we can break through this bound.
\section{Numerical Experiments}
Our theoretical insights are confirmed numerically on different classification tasks. We consider the distributed minimization of the negative cross-entropy:
\begin{align}\label{def:usual_crossentropy}
    h(x)=-\frac{1}{M}\sum_{m=1}^M\frac{1}{n_m}\sum_{j=1}^{n_m}\sum_{c\in C}y^m_{j,c}\log{\hat{y}^m_{j,c}(a^m_j, x)},
\end{align}
where $C$ is the set of classes, $y^m_{j,c}$ and $\hat{y}^m_{j,c}(a^m_j, x)$ are the $c$-th components of one-hot encoded and predicted label for the sample $a^m_j$, respectively. Motivated by the opportunity to introduce heterogeneity in the distribution of modes, we choose two sets of classes ($C_f$, $C_g$) and create an imbalance between them in such a way that the server has more objects from $C_f$ than from $C_g$. Moreover, we divide the nodes (excepting the server) into two groups: $M_f$ and $M_g$, containing only $C_f$ and $C_g$, respectively. Thus, we aim to use $\delta_f<\delta_g$ to communicate with the fraction of the devices less frequently. In accordance with \eqref{prob_form_comp}, the objective takes the form:
\begin{align}\label{def:composite_crossentropy}
\begin{split}
    h(x)\!=\!f(x)\!+\!g(x)\!=\!&-\frac{1}{|M_f|}\sum_{m\in M_f}\frac{1}{n_m}\sum_{j=1}^{n_m}\sum_{c\in C_f}y^m_{j,c}\log{\hat{y}^m_{j,c}(a^m_j,x)}-\frac{1}{|M_g|}\sum_{m\in M_g}\frac{1}{n_m}\sum_{j=1}^{n_m}\sum_{c\in C_g}y^m_{j,c}\log{\hat{y}^m_{j,c}(a^m_j,x)}.
\end{split}
\end{align}
In order to construct setups with different $\nicefrac{\delta_g}{\delta_f}$ ratios, we introduce a disparity index $\kappa$, defined as the proportion of objects from $C_f$ among all available data on the server. Thus, $\kappa=1$ means that it contains only $C_f$, and $\kappa=\nicefrac{1}{2}$ corresponds to a completely homogeneous scenario (equal $\delta_f$ and $\delta_g$). Since it is impossible to estimate $\delta_f$, $\delta_g$ analytically, we tune the parameters of each algorithm to the fastest convergence.

In this work, we provide a comparison of our approaches with distributed learning methods, such as \texttt{ProxyProx} \citep{woodworth2023two}, \texttt{Accelerated Extragradient} (\texttt{AEG}) \citep{kovalev2022optimal}.
\subsection{Multilayer Perceptron}
Firstly, we use \textit{MLP} to solve the \textit{MNIST} \citep{deng2012mnist} classification problem with $C_f=\{0,\ldots,3\}$, $C_g=\{4,\ldots,9\}$, $|M_f|=|M_g|$. To keep the task from being too simple, we consider the three-layer network (784, 64, 10 parameters).

\begin{figure}[ht!]
\centering
\includegraphics[width=0.96\textwidth]{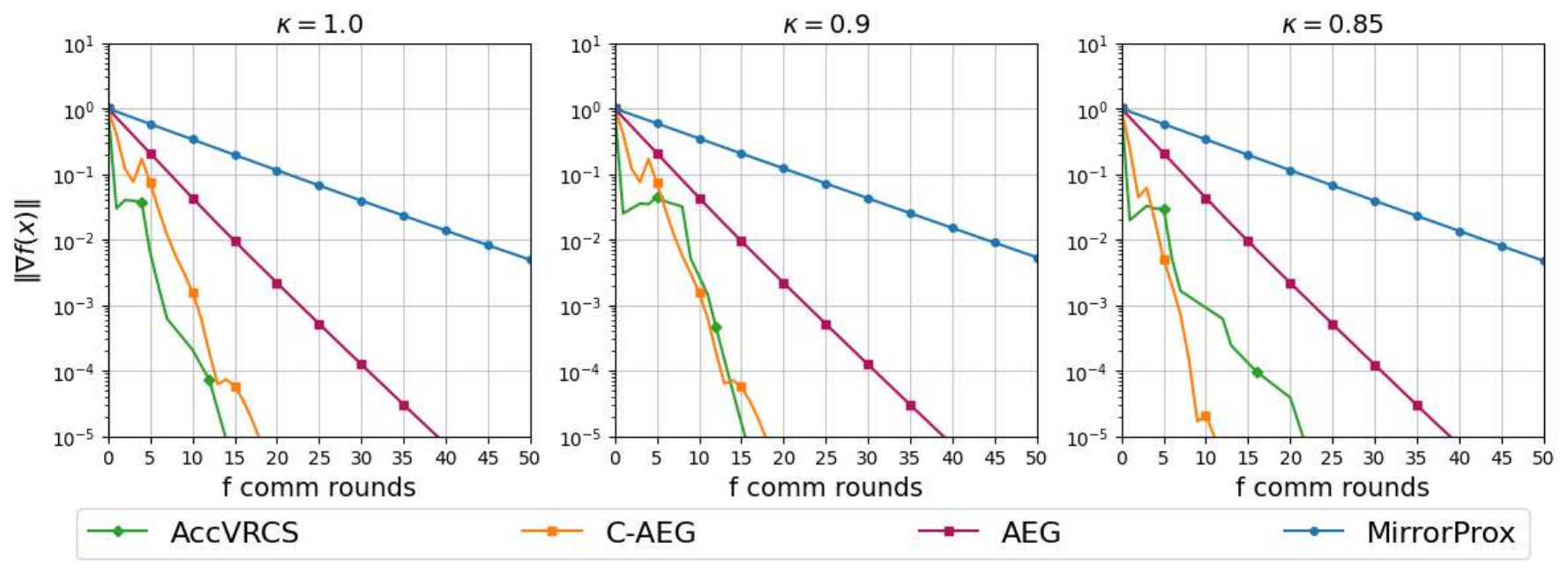}
    \caption{Comparison of state-of-the-art distributed methods on \eqref{def:composite_crossentropy} with $|M_f|=|M_g|=32$ and \textit{MNIST} dataset. The criterion is the number of communication rounds over $M_f$. To show robustness, we vary the disparity parameter $\kappa$.}
\label{fig:mnist}
\end{figure}
Figure \ref{fig:mnist} demonstrates clear superiority of the proposed approach in terms of communication with $M_f$. This effect is achieved through the dissimilar use of well- and poorly-conditioned clients. This experiment demonstrates the potential of complexity separation techniques in processing real-world federated learning scenarios, where the server represents different parts of the sample unevenly.
\subsection{ResNet-18}
In the second part of the experimental section, we consider \textit{CIFAR-10} \citep{krizhevsky2009cifar} with $C_f=\{4,\ldots,9\}$, $C_g=\{0,\ldots,3\}$, $|M_f|=|M_g|$. Since variance reduction in deep learning is associated with various challenges \citep{defazio2019ineffectiveness}, we focus on comparing the two approaches: \texttt{SC-Extragradient} (Algorithm \ref{alg:stoch_extragrad}) and \texttt{Accelerated Extragradient} \citep{kovalev2022optimal}. To minimize \eqref{def:composite_crossentropy}, we implement two heads in \textit{ResNet-18} \citep{he2016deepresnset}, each corresponding to its respective set of classes. The weighted average classification accuracy for objects from $C_f$ and $C_g$ is used as a metric. The curves for the examined strategies are presented in Figure \ref{fig:cifar_weighted_acc}.
\begin{figure}[ht!]
\centering
\includegraphics[width=0.96\textwidth]{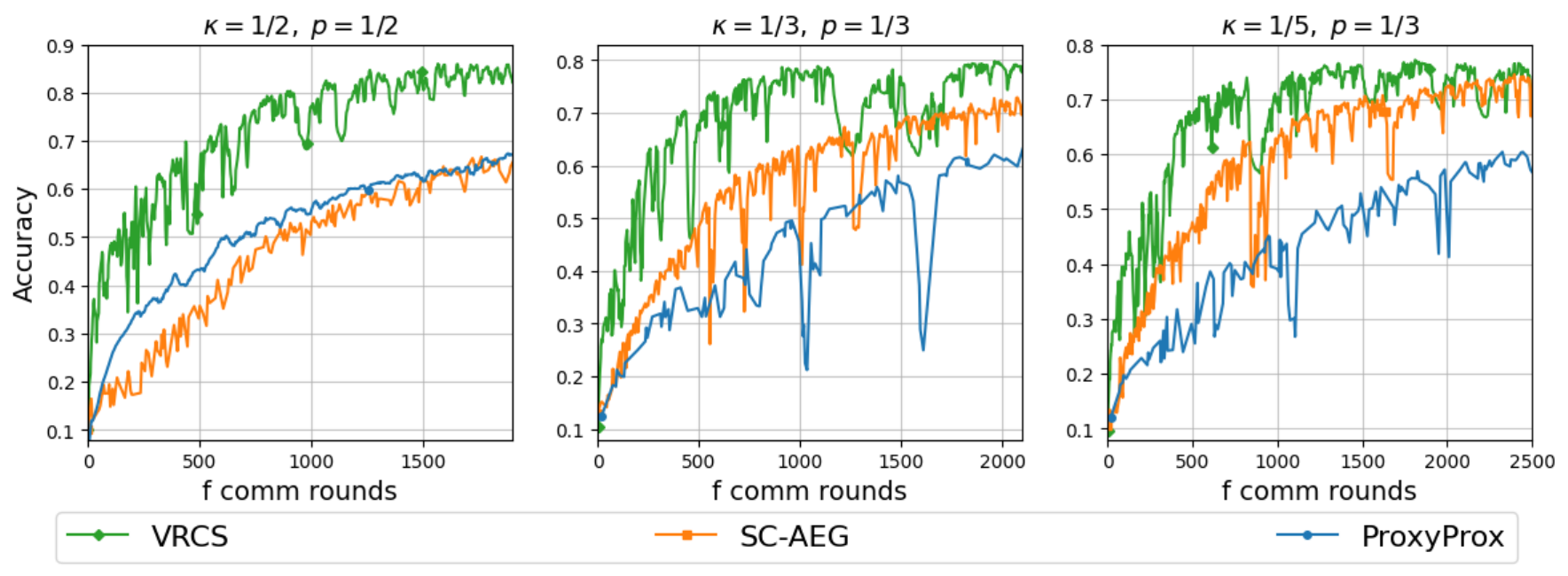}
\caption{Comparison of \texttt{Accelerated Extragradient} and \texttt{SC-AccExtragradient} on \eqref{def:composite_crossentropy} with $|M_f|=|M_g|=5$ and \textit{CIFAR-10} dataset. The criterion is the number of communication rounds over $M_f$. To show robustness, we vary the disparity parameter $\kappa$.}
\label{fig:cifar_weighted_acc}
\end{figure}
\section*{Acknowledgments}
The work was done in the Laboratory of Federated Learning Problems (Supported by Grant App. No. 2 to Agreement No. 075-03-2024-214).
\end{mainpart}
\bibliographystyle{plainnat}
\bibliography{refs}

\begin{appendixpart}
\section{Auxillary Lemmas}
\begin{lemma}{\textbf{(Three-point equality)}, \citep{acharyya2013bregman}} \label{lem:three-point}. Given a differentiable function $h\colon\R^d\rightarrow\mathbb{R}$. We have
    \begin{align*}
        \langle x-y,\nabla h(y)-\nabla h(z)\rangle = D_h(x, z) - D_h(x, y) - D_h(y, z).
    \end{align*}
\end{lemma}

\begin{lemma}\citep{allen2018katyusha}\label{lem:geom_property}
    Given a sequence $D_0,D_1,...,D_N\in\R,$ where $N\in \text{Geom}(p)$. Then
    \begin{align*}
        \E_N[D_{N-1}]=pD_0 + (1-p)\E_N[D_N].
    \end{align*}
\end{lemma}

\begin{lemma}\citep{allen2018katyusha}\label{lem:subtask}
    If $g$ is proper $\sigma$-strongly convex and $z_{k+1}=\arg\min_{z\in\R}\left[ \frac{1}{2\alpha}\|z-z_k\|^2 + \langle G_{k+1},z\rangle +g(z)  \right]$, then for every $x\in\R^d$ we have
    \begin{align*}
        \langle G_{k+1},z_k-x\rangle + g(z_{k+1})-g(x) \leq \frac{\alpha}{2}\|G_{k+1}\|^2 + \frac{\|z_k-x\|^2}{2\alpha} - \frac{(1+\sigma\alpha)}{2\alpha}\|z_{k+1}-x\|^2.
    \end{align*}
\end{lemma}

\section{Proof of Theorem \ref{th:stoch_extragrad}}\label{ap:A}
\begin{theorem}{(Theorem \ref{th:stoch_extragrad})}
    Consider Algorithm \ref{alg:stoch_extragrad} for the problem \ref{prob_form_comp} under Assumptions \ref{ass:h}-\ref{ass:variances}, , with the following tuning:
    \begin{align}\label{eq:stoch_extragrad_proof_choice}
        \theta\leq\frac{1}{3(\delta_f+\delta_g)},\quad\tau=\sqrt{\mu\theta},\quad\eta=\min\left\{ \frac{1}{2\mu},\frac{1}{2}\sqrt{\frac{\theta}{\mu}} \right\},\quad\alpha=\mu.
    \end{align}
    Let $\overline{x}_{k+1}$ satisfy:
    \begin{align*}
        \E\left[ \|\nabla A_{\theta}^k(\overline{x}_{k+1})\|^2 \right] \leq \E\left[ \frac{\theta^2}{11}\|\underline{x}_k-\arg\min_{x\in\R^d}A_{\theta}^k(x)\|^2 \right].
    \end{align*}
    Then the complexities in terms of communication rounds are
    \begin{align*}
        \mathcal{O}\left(\sqrt{\frac{\delta_f}{\mu}}\log\frac{1}{\varepsilon}+\frac{\sigma^2}{\mu\varepsilon}\right)\text{ for the nodes from $M_f$},
    \end{align*}
    and
    \begin{align*}
        \mathcal{O}\left(\sqrt{\frac{\delta_g}{\mu}}\log\frac{1}{\varepsilon}+\frac{\sigma^2}{\mu\varepsilon}\right)\text{ for the nodes from $M_g$}.
    \end{align*}    
\end{theorem}
\begin{proof}
    We begin with writing the norm of the argument in the standard way, as is usually done in convergence proofs:
    \begin{align*}
        \frac{1}{\eta}\|x_{k+1}-x_*\|^2=\frac{1}{\eta}\|x_k-x_*\|^2+\frac{2}{\eta}\langle x_{k+1}-x_k,x_k-x_* \rangle+\frac{1}{\eta}\|x_{k+1}-x_k\|^2.
    \end{align*}
    Next, we expand the scalar product using Line \ref{eq:stoch_extragrad_step} and obtain
    \begin{align*}
        \frac{1}{\eta}\|x_{k+1}-x_*\|^2=&\frac{1}{\eta}\|x_k-x_*\|^2+2\alpha\langle \overline{x}_{k+1}-x_k,x_k-x_* \rangle-2\langle \zeta_k,x_k-x_* \rangle+\frac{1}{\eta}\|x_{k+1}-x_k\|^2
        \\=&\frac{1}{\eta}\|x_k-x_*\|^2+2\alpha\langle \overline{x}_{k+1}-x_k,x_k-x_* \rangle-2\langle \zeta_k,x_k-x_* \rangle\\&+2\eta\alpha^2\|\overline{x}_{k+1}-x_k\|^2+2\eta\|\zeta_k\|^2.
    \end{align*}
    On the right hand, we have only two terms depending on $\zeta_k$. The expectation over $\zeta_K$ of the scalar product is easy to take, since $x_k$, $x_*$ are independent of this random variable, and $\zeta_k$ itself gives an unbiased estimate of $\nabla h(\overline{x}_{k+1})$. We get
    \begin{align}\label{eq:stoch_extragrad_proof_1}
    \begin{split}
        \E_{\zeta_k}\left[\frac{1}{\eta}\|x_{k+1}-x_*\|^2\right]=&\frac{1}{\eta}\|x_k-x_*\|^2+2\alpha\langle \overline{x}_{k+1}-x_k,x_k-x_* \rangle-2\langle \nabla h(\overline{x}_{k+1}),x_k-x_* \rangle\\&+2\eta\alpha^2\|\overline{x}_{k+1}-x_k\|^2+2\eta\E_{\zeta_k}\left[\|\zeta_k\|^2\right].
    \end{split}
    \end{align}
    To deal with $\E\left[ \|\zeta_k\|^2 \right]$, we use the smart zero technique, adding and subtracting $\nabla h(\overline{x}_{k+1})$. We have
    \begin{align*}
        \E_{\zeta_k}\left[\|\zeta_k\|^2\right]=&\E_{\zeta_k}\left[\|(\zeta_k-\nabla h(\overline{x}_{k+1}))+\nabla h(\overline{x}_{k+1})\|^2\right]\\=&\E_{\zeta_k}\left[\|\zeta_k-\nabla h(\overline{x}_{k+1})\|^2\right]+\|\nabla h(\overline{x}_{k+1})\|^2+2\E_{\zeta_k}\left[\langle \zeta_k-\nabla h(\overline{x}_{k+1}),\nabla h(\overline{x}_{k+1}) \rangle\right]
        \\=&\E_{\zeta_k}\left[\|\zeta_k-\nabla h(\overline{x}_{k+1})\|^2\right]+\|\nabla h(\overline{x}_{k+1})\|^2.
    \end{align*}
    Here, the scalar product is zeroed because $\E_{\zeta_k}\left[\zeta_k\right]=\nabla h(\overline{x}_{k+1})$, and $\nabla h(\overline{x}_{k+1})$ is independent of $\zeta_k$. Now we are ready to use Assumption \ref{ass:variances} and obtain
    \begin{align*}
        \E_{\zeta_k}\left[\|\zeta_k\|^2\right]\leq\E\left[\|\nabla h(\overline{x}_{k+1})\|^2\right]+\sigma^2.
    \end{align*}
    Substitute this into \eqref{eq:stoch_extragrad_proof_1} and get
    \begin{align*}
        \E_{\zeta_k}\left[\frac{1}{\eta}\|x_{k+1}-x_*\|^2\right]\leq&\frac{1}{\eta}\|x_k-x_*\|^2+2\alpha\langle \overline{x}_{k+1}-x_k,x_k-x_* \rangle-2\langle \nabla h(\overline{x}_{k+1}),x_k-x_* \rangle\\&+2\eta\alpha^2\|\overline{x}_{k+1}-x_k\|^2+2\eta\|\nabla h(\overline{x}_{k+1})\|^2+2\eta\sigma^2.
    \end{align*}
    Let us apply the formula for the square of the difference to $2\alpha\langle \overline{x}_{k+1}-x_k,x_k-x_* \rangle$. We obtain
    \begin{align*}
        \E_{\zeta_k}\left[\frac{1}{\eta}\|x_{k+1}-x_*\|^2\right]\leq&\frac{1-\eta\alpha}{\eta}\|x_k-x_*\|^2-\alpha\|\overline{x}_{k+1}-x_k\|^2+\alpha\|\overline{x}_{k+1}-x_*\|^2\\&-2\langle \nabla h(\overline{x}_{k+1}),x_k-x_* \rangle+2\eta\alpha^2\|\overline{x}_{k+1}-x_k\|^2+2\eta\|\nabla h(\overline{x}_{k+1})\|^2\\&+2\eta\sigma^2.
    \end{align*}
    Using Line \ref{eq:stoch_extragrad_combine}, we rewrite the last remaining scalar product and get
    \begin{align}\label{eq:stoch_extragrad_proof_2}
    \begin{split}
        \E_{\zeta_k}\left[\frac{1}{\eta}\|x_{k+1}-x_*\|^2\right]\leq&\frac{1-\eta\alpha}{\eta}\|x_k-x_*\|^2-\alpha\|\overline{x}_{k+1}-x_k\|^2+\alpha\|\overline{x}_{k+1}-x_*\|^2\\&+2\langle \nabla h(\overline{x}_{k+1}),x_*-\underline{x}_k \rangle+\frac{2(1-\tau)}{\tau}\langle \nabla h(\overline{x}_{k+1}),\overline{x}^k-\underline{x}_k
 \rangle\\&+2\eta\alpha^2\|\overline{x}_{k+1}-x_k\|^2+2\eta\|\nabla h(\overline{x}_{k+1})\|^2+2\eta\sigma^2.
    \end{split}
    \end{align}
    To move on, we have to figure out what to do with the scalar product. Let us start with
    \begin{align*}
        2\langle \nabla h(\overline{x}_{k+1}),x-\underline{x}_k \rangle=2\langle \nabla h(\overline{x}_{k+1}),x-\overline{x}_{k+1} \rangle+2\langle \nabla h(\overline{x}_{k+1}),\overline{x}_{k+1}-\underline{x}_k \rangle.
    \end{align*}
    In the first of the scalar products, we use strong convexity due to Assumption \ref{ass:h}. We get
    \begin{align*}
        2\langle \nabla h(\overline{x}_{k+1}),x-\underline{x}_k \rangle\leq[h(x)-h(\overline{x}_{k+1})]-\mu\|\overline{x}_{k+1}-x\|^2+2\theta\left\langle \nabla h(\overline{x}_{k+1}),\frac{\overline{x}_{k+1}-\underline{x}_k}{\theta} \right\rangle.
    \end{align*}
    Then, using the square of the difference once again, we obtain
    \begin{align}\label{eq:stoch_extragrad_proof_3}
    \begin{split}
        2\langle \nabla h(\overline{x}_{k+1}),x-\underline{x}_k \rangle\leq&[h(x)-h(\overline{x}_{k+1})]-\mu\|\overline{x}_{k+1}-x\|^2+2\langle \nabla h(\overline{x}_{k+1}),\overline{x}_{k+1}-\underline{x}_k \rangle\\=&[h(x)-h(\overline{x}_{k+1})]-\mu\|\overline{x}_{k+1}-x\|^2-\frac{1}{\theta}\|\overline{x}_{k+1}-\underline{x}_k\|^2-\theta\|\nabla h(\overline{x}_{k+1})\|^2\\&+\theta\left\| \frac{\overline{x}_{k+1}-\underline{x}_k}{\theta}+\nabla h(\overline{x}_{k+1}) \right\|^2.
    \end{split}
    \end{align}
    The last expression on the right hand is almost $A_{\theta}^k(\overline{x}_{k+1})$ from Line \ref{eq:stoch_extragrad_subtask}. Let us take a closer look on it:
    \begin{align*}
        \left\| \frac{\overline{x}_{k+1}-\underline{x}_k}{\theta}+\nabla h(\overline{x}_{k+1}) \right\|^2=&\left\| \frac{\overline{x}_{k+1}-\underline{x}_k}{\theta}+\nabla(h-h_1)(\overline{x}_{k+1})+\nabla h_1(\overline{x}_{k+1}) \right\|^2\\=&\left\| \nabla A_{\theta}^k(\overline{x}_{k+1})-\xi_k+\nabla(h-h_1)(\overline{x}_{k+1}) \right\|^2\\=&\left\| \nabla A_{\theta}^k(\overline{x}_{k+1})-\xi_k+\nabla(h-h_1)(\overline{x}_{k+1}) \right\|^2
        \\\leq&3\|\nabla A_{\theta}^k(\overline{x}_{k+1})\|^2+3\|\xi_k-\nabla(h-h_1)(\underline{x}_{k})\|^2\\&+3\|\nabla(h-h_1)(\underline{x}_{k})-\nabla(h-h_1)(\overline{x}_{k+1})\|^2.
    \end{align*}
    Using Assumption \ref{ass:relatedness}, we obtain
    \begin{align*}
        \left\| \frac{\overline{x}_{k+1}-\underline{x}_k}{\theta}+\nabla h(\overline{x}_{k+1}) \right\|^2\leq&3\|\nabla A_{\theta}^k(\overline{x}_{k+1})\|^2+3\|\xi_k-\nabla(h-h_1)(\underline{x}_{k})\|^2\\&+3(\delta_f+\delta_g)^2\|\overline{x}_{k+1}-\underline{x}_k\|^2.
    \end{align*}
    Note that now the right-hand side depends on the random variable $\xi_k$. Using Assumption \ref{ass:variances}, we write the estimate for the mathematical expectation of the expression:
    \begin{align}\label{eq:stoch_extragrad_proof_4}
        \E_{\xi_k}\left[\left\| \frac{\overline{x}_{k+1}-\underline{x}_k}{\theta}+\nabla h(\overline{x}_{k+1}) \right\|^2\right]\leq\E_{\xi_k}\left[3\|\nabla A_{\theta}^k(\overline{x}_{k+1})\|^2+3(\delta_f+\delta_g)^2\|\overline{x}_{k+1}-\underline{x}_k\|^2\right]+3\sigma^2.
    \end{align}
    Substituting \eqref{eq:stoch_extragrad_proof_4} into \eqref{eq:stoch_extragrad_proof_3}, we obtain
    \begin{align*}
        \E_{\xi_k}\left[2\langle \nabla h(\overline{x}_{k+1}),x-\underline{x}_k \rangle\right]\leq&\E_{\xi_k}\Big[[h(x)-h(\overline{x}_{k+1})]-\mu\|\overline{x}_{k+1}-x\|^2\\&-\frac{1}{\theta}\left(1-3(\delta_f+\delta_g)^2\theta^2\right)\|\overline{x}_{k+1}-\underline{x}_k\|^2-\theta\|\nabla h(\overline{x}_{k+1})\|^2\\&+3\theta\|A_{\theta}^k(\overline{x}_{k+1})\|^2\Big]+3\theta\sigma^2.
    \end{align*}
    Choosing $\theta\leq\nicefrac{1}{3(\delta_f+\delta_g)}$, we get
    \begin{align*}
        \E_{\xi_k}\left[2\langle \nabla h(\overline{x}_{k+1}),x-\underline{x}_k \rangle\right]\leq&\E_{\xi_k}\Big[[h(x)-h(\overline{x}_{k+1})]-\mu\|\overline{x}_{k+1}-x\|^2-\frac{2}{3\theta}\|\overline{x}_{k+1}-\underline{x}_k\|^2\\&-\theta\|\nabla h(\overline{x}_{k+1})\|^2+3\theta\|A_{\theta}^k(\overline{x}_{k+1})\|^2\Big]+3\theta\sigma^2.
    \end{align*}
    Note that
    \begin{align*}
        -\|a-b\|^2\leq-\frac{1}{2}\|a-c\|^2+\|b-c\|^2.
    \end{align*}
    Thus, we have
    \begin{align*}
        \E_{\xi_k}\left[2\langle \nabla h(\overline{x}_{k+1}),x-\underline{x}_k \rangle\right]\leq&\E_{\xi_k}\Big[[h(x)-h(\overline{x}_{k+1})]-\mu\|\overline{x}_{k+1}-x\|^2\\&-\frac{1}{3\theta}\|\underline{x}_k-\arg\min_{x\in\R^d}A_{\theta}^k(x)\|^2+\frac{2}{3\theta}\|\overline{x}_{k+1}-\arg\min_{x\in\R^d}A_{\theta}^k(x)\|^2\\&-\theta\|\nabla h(\overline{x}_{k+1})\|^2+3\theta\|A_{\theta}^k(\overline{x}_{k+1})\|^2\Big]\\&+3\theta\sigma^2.
    \end{align*}
    $A_{\theta}^k$ is $\nicefrac{1}{\theta}$-strongly convex. This implies 
    \begin{align*}
        \frac{2}{3\theta}\|\overline{x}_{k+1}-\arg\min_{x\in\R^d}A_{\theta}^k(x)\|^2\leq\frac{2\theta}{3}\|\nabla A_{\theta}^k(\overline{x}_{k+1})\|^2
    \end{align*}
    Hence, we can write
    \begin{align*}
        \E_{\xi_k}\left[2\langle \nabla h(\overline{x}_{k+1}),x-\underline{x}_k \rangle\right]\leq&\E_{\xi_k}\Big[[h(x)-h(\overline{x}_{k+1})]-\mu\|\overline{x}_{k+1}-x\|^2\\&-\frac{1}{3\theta}\|\underline{x}_k-\arg\min_{x\in\R^d}A_{\theta}^k(x)\|^2-\theta\|\nabla h(\overline{x}_{k+1})\|^2\\&+\frac{11\theta}{3}\|A_{\theta}^k(\overline{x}_{k+1})\|^2\Big]+3\theta\sigma^2.
    \end{align*}
    Using \eqref{eq:stoch_extragrad_criterion}, we conclude:
    \begin{align}\label{eq:stoch_extragrad_proof_5}
    \begin{split}
        \E_{\xi_k}\left[2\langle \nabla h(\overline{x}_{k+1}),x-\underline{x}_k \rangle\right]\leq&\E_{\xi_k}\Big[[h(x)-h(\overline{x}_{k+1})]-\mu\|\overline{x}_{k+1}-x\|^2-\theta\|\nabla h(\overline{x}_{k+1})\|^2\Big]\\&+3\theta\sigma^2.
    \end{split}
    \end{align}
    We take the expectation of \eqref{eq:stoch_extragrad_proof_2} over $\xi_k$ and substitute \eqref{eq:stoch_extragrad_proof_5}. Taking $\alpha=\mu$ into account, we obtain
    \begin{align*}
        \E_{\zeta_k,\xi_k}\left[ \frac{1}{\eta}\|x_k-x_*\|^2 \right]\leq&\E_{\zeta_k,\xi_k}\Big[ \frac{1-\eta\alpha}{\eta}\|x_k-x_*\|^2-\alpha(1-2\eta\alpha)\|\overline{x}_{k+1}-x_k\|^2 \\&+2\eta\|\nabla h(\overline{x}_{k+1})\|^2+2\eta\sigma^2+[h(x_*)-h(\overline{x}_{k+1})]\\&+\frac{1-\tau}{\tau}[h(\overline{x}_k)-h(\overline{x}_{k+1})]-\frac{\theta}{\tau}\|\nabla h(\overline{x}_{k+1})\|^2+\frac{3\theta}{\tau}\sigma^2\Big].
    \end{align*}
    With our choice of parameters (see \eqref{eq:stoch_extragrad_proof_choice}), we have
    \begin{align*}
        \E_{\zeta_k,\xi_k}\left[ \frac{1}{\eta}\|x_k-x_*\|^2 \right]\leq&\E_{\zeta_k,\xi_k}\Big[ \frac{1-\eta\alpha}{\eta}\|x_k-x_*\|^2+\frac{1}{\tau}[h(x_*)-h(\overline{x}_{k+1})]\\&+\frac{1-\tau}{\tau}[h(\overline{x}_k)-h(x_*)]+\frac{4\theta}{\tau}\sigma^2\Big].
    \end{align*}
    Multiplying this expression by $\tau$, we obtain
    \begin{align*}
        \E_{\zeta_k,\xi_k}\left[ \frac{\tau}{\eta}\|x_k-x_*\|^2+[h(\overline{x}_{k+1})-h(x_*)] \right]\leq&\E_{\zeta_k,\xi_k}\Big[ \frac{\tau}{\eta}(1-\eta\alpha)\|x_k-x_*\|^2\\&+(1-\tau)[h(\overline{x}_k)-h(x_*)]\Big]+\frac{4\theta}{\tau}\sigma^2.
    \end{align*}
    Denote
    \begin{align*}
        \Phi_k=\frac{\tau}{\eta}\|x_k-x_*\|^2+[h(\overline{x}_k)-h(x_*)].
    \end{align*}
    Using the choice of parameters as in \eqref{eq:stoch_extragrad_proof_choice}, write down the result:
    \begin{align*}
        \E_{\zeta_k,\xi_k}\left[\Phi_{k+1}\right]\leq\left( 
1-\frac{1}{2}\sqrt{\mu\theta} \right)\Phi_k+4\theta\sigma^2.
    \end{align*}
    Thus, we have convergence to some neighborhood of the solution. To achieve the "true" convergence, we have to make a finer tuning of $\theta$. \citet{stich2019unified} analyzed the recurrence sequence
    \begin{align*}
        0\leq(1-a\gamma)r_k-r_{k+1}+c\gamma^2,\quad\gamma\leq\frac{1}{d}
    \end{align*}
    and obtained (see Lemma 2 in \citep{stich2019unified})
    \begin{align*}
        ar_{K+1}\leq\tilde{\mathcal{O}}\left( dr_0\exp\left\{ -\frac{aK}{d} \right\}+\frac{c}{aK} \right).
    \end{align*}
    In our analysis, we have
    \begin{align*}
        \gamma=\sqrt{\theta},\quad d=\frac{1}{\sqrt{3(\delta_f+\delta_g)}},\quad a=\frac{\sqrt{\mu}}{2},\quad c=4\sigma^2.
    \end{align*}
    Thus, Algorithm \ref{alg:stoch_extragrad} requires
    \begin{align*}
        \mathcal{O}\left(\sqrt{\frac{\delta_f+\delta_g}{\mu}}\log\frac{1}{\varepsilon}+\frac{\sigma^2}{\mu\varepsilon}\right)\text{ epochs}
    \end{align*}
    to converge to an arbitrary $\varepsilon$-solution. Of these, the $p$ fraction engages only $M_f$ and the $1-p$ uses only $M_g$. Choosing $p=\nicefrac{\delta_f}{(\delta_f+\delta_g)}$ and using $\delta_f<\delta_g$, we obtain
    \begin{align*}
        \mathcal{O}\left(\sqrt{\frac{\delta_f}{\mu}}\log\frac{1}{\varepsilon}+\frac{\sigma^2}{\mu\varepsilon}\right)\text{ communication rounds for $M_f$},
    \end{align*}
    and
    \begin{align*}
        \mathcal{O}\left(\sqrt{\frac{\delta_g}{\mu}}\log\frac{1}{\varepsilon}+\frac{\sigma^2}{\mu\varepsilon}\right)\text{ communication rounds for $M_g$}.
    \end{align*}
\end{proof}
\section{Descent Lemma for Variance Reduction}
\begin{lemma}\label{lem:descent}
    Consider an epoch of Algorithm \ref{alg:vrcs}. Consider $\psi(x)=h^1(x)-h(x)+\nicefrac{1}{2\theta}\|x\|^2$, where $\theta\leq\nicefrac{1}{2(\delta_f+\delta_g)}$. Let $x_{t+1}$ satisfy
    \begin{align*}
        \E\|\nabla A_{\theta}^t(x_{t+1})\|^2\leq \frac{\mu}{17\theta}\left\| \arg\min_{x\in\R^d}A_{\theta}^t(x) - x_t \right\|^2.
    \end{align*}
    Then the following inequality holds for every $x\in\R^d$:
    \begin{align*}
        \E \left[h(x_{T})-h(x)\right]\leq\E&\Bigg[ qD_{\psi}(x,x_0)-qD_{\psi}(x,x_{T})+8\theta^2\left( \frac{\delta_f^2}{p}+\frac{\delta_g^2}{1-p} \right)D_{\psi}(x_0,x_T)\\&-\frac{\mu\theta}{3}D_{\psi}(x,x_{T}) \Bigg].
    \end{align*}
\end{lemma}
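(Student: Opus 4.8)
The plan is to recognize that the subproblem in Line \ref{eq:vrcs_subtask} is a \emph{stochastic Bregman proximal-point step on $h$} with the divergence induced by $\psi$. Since $\psi(x)=h_1(x)-h(x)+\frac{1}{2\theta}\|x\|^2$, we have $h_1(x)+\frac{1}{2\theta}\|x-x_t\|^2=\psi(x)+h(x)-\frac{1}{\theta}\langle x,x_t\rangle+\text{const}$, so minimizing $A_\theta^t$ is equivalent to minimizing $h(x)+\langle g_t,x\rangle+D_\psi(x,x_t)$ with $g_t=e_t+\nabla h_1(x_t)-\nabla h(x_t)$. The first thing I would verify is that $e_t$ is unbiased for $\nabla(h-h_1)(x_t)$ — the normalizations by $p$ and $1-p$ in the definitions of $\xi_t,\zeta_t$ are designed exactly for this — so that $\E[g_t\mid\mathcal F_t]=0$ and the surrogate gradient is pure mean-zero noise. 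I would also record a two-sided control of $\psi$: Assumption \ref{ass:relatedness} gives $\|\nabla^2h_1-\nabla^2h\|\le\delta_f+\delta_g$, hence with $\theta\le\frac{1}{2(\delta_f+\delta_g)}$ the function $\psi$ is $\frac{1}{2\theta}$-strongly convex and $\frac{3}{2\theta}$-smooth, so $\frac{1}{4\theta}\|u-v\|^2\le D_\psi(u,v)\le\frac{3}{4\theta}\|u-v\|^2$.

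Next I would derive the one-step inequality. Writing $r_t=\nabla A_\theta^t(x_{t+1})$ for the inexactness and using the identity $\nabla h(x_{t+1})+g_t+\nabla\psi(x_{t+1})-\nabla\psi(x_t)=r_t$, I take the inner product with $x_{t+1}-x$ and apply the three-point equality (Lemma \ref{lem:three-point}) to the $\psi$-terms together with $\mu$-strong convexity of $h$ (Assumption \ref{ass:h}). This produces $h(x_{t+1})-h(x)\le D_\psi(x,x_t)-D_\psi(x,x_{t+1})-D_\psi(x_{t+1},x_t)-\frac{\mu}{2}\|x_{t+1}-x\|^2-\langle g_t,x_{t+1}-x\rangle+\langle r_t,x_{t+1}-x\rangle$. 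The only term whose expectation is not immediate is $-\langle g_t,x_{t+1}-x\rangle$; I split it as $-\langle g_t,x_t-x\rangle-\langle g_t,x_{t+1}-x_t\rangle$, where the first vanishes in conditional expectation (because $\E[g_t\mid\mathcal F_t]=0$ and $x_t,x$ are $\mathcal F_t$-measurable) and the second is bounded by Young's inequality, absorbing $\|x_{t+1}-x_t\|^2$ into half of $D_\psi(x_{t+1},x_t)$.

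The variance term is where the similarity constants enter. Centering the estimator, $\E\|g_t\|^2$ is at most the second moment of the uncentered difference, namely $\frac{1}{p}\|\nabla(f-f_1)(x_t)-\nabla(f-f_1)(x_0)\|^2+\frac{1}{1-p}\|\nabla(g-g_1)(x_t)-\nabla(g-g_1)(x_0)\|^2$; using that $\nabla(f-f_1)$ and $\nabla(g-g_1)$ are $\delta_f$- and $\delta_g$-Lipschitz (the Hessian bounds in Assumption \ref{ass:relatedness}) together with $\|x_t-x_0\|^2\le 4\theta D_\psi(x_0,x_t)$, I get $\E\|g_t\|^2\le 4\theta\left(\frac{\delta_f^2}{p}+\frac{\delta_g^2}{1-p}\right)D_\psi(x_0,x_t)$, which after the Young step yields exactly the coefficient $8\theta^2\left(\frac{\delta_f^2}{p}+\frac{\delta_g^2}{1-p}\right)$ in front of $D_\psi(x_0,x_t)$. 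For the inexactness I split the strong-convexity term: a $\frac{\mu}{4}\|x_{t+1}-x\|^2$ part absorbs $\langle r_t,x_{t+1}-x\rangle$ (via Young's and the criterion \eqref{eq:vrcs_criterion}, using the retained half of $D_\psi(x_{t+1},x_t)$), while the remaining $-\frac{\mu}{4}\|x_{t+1}-x\|^2$ converts, through the smoothness bound $D_\psi\le\frac{3}{4\theta}\|\cdot\|^2$, into $-\frac{\mu\theta}{3}D_\psi(x,x_{t+1})$.

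Finally I would apply this per-step inequality at the single index $t=T-1$, so that its left-hand side is precisely $h(x_T)-h(x)$, and then average over $T\sim\text{Geom}(q)$, which is sampled independently of the inner randomness. Lemma \ref{lem:geom_property} applied to the sequence $D_\psi(x,x_t)$ turns $\E[D_\psi(x,x_{T-1})]-\E[D_\psi(x,x_T)]$ into $qD_\psi(x,x_0)-q\E[D_\psi(x,x_T)]$, and applied to $D_\psi(x_0,x_t)$ (which vanishes at $t=0$) bounds the variance contribution by $8\theta^2\left(\frac{\delta_f^2}{p}+\frac{\delta_g^2}{1-p}\right)\E[D_\psi(x_0,x_T)]$, while the leftover strong-convexity term supplies $-\frac{\mu\theta}{3}D_\psi(x,x_T)$ — exactly the stated bound. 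I expect the main obstacle to be the bookkeeping of the previous paragraph: making the Young constant, the tolerance $\frac{\mu}{17\theta}$, and the strong-convexity split fit together so that $\frac{1}{\mu}\|r_t\|^2$ is genuinely dominated by the retained half of $D_\psi(x_{t+1},x_t)$. This forces a comparison of $\|x_t-\arg\min_{x\in\R^d}A_\theta^t(x)\|$ with $\|x_{t+1}-x_t\|$ and relies on $\mu\theta<1$, and it is the step where the precise numerical constants are pinned down.
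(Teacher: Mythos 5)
Your proposal is correct and follows essentially the same route as the paper's proof: your mean-zero surrogate $g_t$ and the identity $\nabla h(x_{t+1})+g_t+\nabla\psi(x_{t+1})-\nabla\psi(x_t)=r_t$ are exactly the paper's decomposition \eqref{eq:lemma_proof_2}, and the subsequent steps (three-point equality, splitting the noise product and killing it in conditional expectation, Young with the variance bound $\bigl(\nicefrac{\delta_f^2}{p}+\nicefrac{\delta_g^2}{1-p}\bigr)\|x_t-x_0\|^2$, absorbing the inexactness via the criterion and strong convexity of $A_\theta^t$, then Lemma \ref{lem:geom_property} at $t=T-1$) mirror the paper's argument, including the constants $8\theta^2$ and $\nicefrac{\mu\theta}{3}$. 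The constant bookkeeping you flag as the remaining obstacle is precisely what the paper resolves with the bound $\bigl(\tfrac{1}{\mu}+\tfrac{\theta}{8}\bigr)\|r_t\|^2\le\tfrac{8+\theta\mu}{8\mu}\bigl[\|r_t\|^2-\tfrac{\mu}{17\theta}\|x_t-\arg\min_x A_\theta^t(x)\|^2\bigr]+\tfrac{1}{16\theta}\|x_t-\arg\min_x A_\theta^t(x)\|^2$, valid since $\mu\theta\le\nicefrac{1}{2}$, just as you anticipated.
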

\begin{proof}
    Let us differentiate the subproblem (Line \ref{eq:vrcs_subtask}):
    \begin{align*}
        \nabla A_{\theta}^t(x) = e_t + \frac{x-x_0}{\theta} + \nabla h_1(x).
    \end{align*}
    After substituting $e_t$, we have
    \begin{align*}
        \nabla A_{\theta}^t(x) = \xi_t-\zeta_t + \frac{x-x_0}{\theta} + \nabla h_1(x).
    \end{align*}
    Next, we add and subtract the expressions: $\nabla h(x)$, $\nabla h(x_t)$, $\nabla h_1(x_t)$. After grouping the terms, we get
    \begin{align}\label{eq:lemma_proof_1}
    \begin{split}
        \nabla A_{\theta}^t(x) =&\left\{ \left[\xi_t-\zeta_t\right]-\left[ \nabla(h-h_1)(x_t)-\nabla(h-h_1)(x_0) \right] \right\}
        \\&+\left\{ \nabla(h_1-h)(x)+\frac{x}{\theta}-\nabla(h_1-h)(x_t)-\frac{x_t}{\theta} \right\}
        \\&+\nabla h(x).
    \end{split}
    \end{align}
    In the conditions of Lemma \ref{lem:descent}, we defined distance generating function as
    \begin{align*}
        \psi(x)=h_1(x)-h(x)+\frac{1}{2\theta}\|x\|^2.
    \end{align*}
    It is not difficult to notice the presence of its gradient in \eqref{eq:lemma_proof_1}. Thus, we have
    \begin{align}\label{eq:lemma_proof_2}
    \begin{split}
        \nabla A_{\theta}^t(x) =&\left\{ \left[\xi_t-\zeta_t\right]-\left[ \nabla(h-h_1)(x_t)-\nabla(h-h_1)(x_0) \right] \right\}
        \\&+\left\{ \nabla\psi(x)-\nabla\psi(x_t) \right\}
        \\&+\nabla h(x).
    \end{split}
    \end{align}
    Now we can express $\nabla h(x)$. Using definition of strong convexity (Definition \ref{def:strong_convexity}), we write
    \begin{align*}
        h(x_{t+1})-h(x)\leq\langle x-x_{t+1},-\nabla h(x_{t+1}\rangle -\frac{\mu}{2}\|x_{t+1}-x\|^2.
    \end{align*}
    Substituting \eqref{eq:lemma_proof_2}, we obtain
    \begin{align*}
        h(x_{t+1})-h(x)\leq&\left\langle x-x_{t+1}, [\xi_t-\zeta_t]-[\nabla(h-h_1)(x_t)-\nabla(h-h_1)(x_0)]\right\rangle 
        \\&+\langle x-x_{t+1},\nabla\psi(x_{t+1})-\nabla\psi(x) \rangle - \langle x-x_{t+1},\nabla A_{\theta}^t(x_{t+1}) \rangle\\&-\frac{\mu}{2}\|x_{t+1}-x\|^2.
    \end{align*}
    Rewriting the first scalar product using smart zero $x_k$, we obtain
    \begin{align*}
        h(x_{t+1})-h(x)\leq&\left\langle x-x_t, [\xi_t-\zeta_t]-[\nabla(h-h_1)(x_t)-\nabla(h-h_1)(x_0)]\right\rangle\\&+\left\langle x_t-x_{t+1}, [\xi_t-\zeta_t]-[\nabla(h-h_1)(x_t)-\nabla(h-h_1)(x_0)]\right\rangle 
        \\&+\langle x-x_{t+1},\nabla\psi(x_{t+1})-\nabla\psi(x) \rangle - \langle x-x_{t+1},\nabla A_{\theta}^t(x_{t+1}) \rangle\\&-\frac{\mu}{2}\|x_{t+1}-x\|^2.
    \end{align*}
    Let us apply Young's inequality to the second scalar product. We get
    \begin{align*}
        h(x_{t+1})-h(x)\leq&\left\langle x-x_t, [\xi_t-\zeta_t]-[\nabla(h-h_1)(x_t)-\nabla(h-h_1)(x_0)]\right\rangle\\&+\frac{1}{2\alpha}\|x_{t+1}-x_t\|^2+\frac{\alpha}{2}\|[\xi_t-\zeta_t]-[\nabla(h-h_1)(x_t)-\nabla(h-h_1)(x_0)]\|^2
        \\&+\langle x-x_{t+1},\nabla\psi(x_{t+1})-\nabla\psi(x) \rangle - \langle x-x_{t+1},\nabla A_{\theta}^t(x_{t+1}) \rangle\\&-\frac{\mu}{2}\|x_{t+1}-x\|^2.
    \end{align*}
    After that, we apply Young's inequality again, now to $\langle x-x_{t+1},\nabla A_{\theta}^t(x_{t+1})$. This allows us to write
    \begin{align*}
        h(x_{t+1})-h(x)\leq&\left\langle x-x_t, [\xi_t-\zeta_t]-[\nabla(h-h_1)(x_t)-\nabla(h-h_1)(x_0)]\right\rangle\\&+\frac{1}{2\alpha}\|x_{t+1}-x_t\|^2+\frac{\alpha}{2}\|[\xi_t-\zeta_t]-[\nabla(h-h_1)(x_t)-\nabla(h-h_1)(x_0)]\|^2
        \\&+\langle x-x_{t+1},\nabla\psi(x_{t+1})-\nabla\psi(x) \rangle +\frac{1}{\mu}\|\nabla A_{\theta}^t(x_{t+1})\|^2-\frac{\mu}{4}\|x_{t+1}-x\|^2.
    \end{align*}
    Next, we use the three-point equality (Lemma \ref{lem:three-point}) and obtain
    \begin{align}\label{eq:lemma_proof_3}
    \begin{split}
        h(x_{t+1})-h(x)\leq&\left\langle x-x_t, [\xi_t-\zeta_t]-[\nabla(h-h_1)(x_t)-\nabla(h-h_1)(x_0)]\right\rangle\\&+\frac{1}{2\alpha}\|x_{t+1}-x_t\|^2+\frac{\alpha}{2}\|[\xi_t-\zeta_t]-[\nabla(h-h_1)(x_t)-\nabla(h-h_1)(x_0)]\|^2
        \\&+D_{\psi}(x,x_t)-D_{\psi}(x,x_{t+1})-D_{\psi}(x_{t+1},x_t) +\frac{1}{\mu}\|\nabla A_{\theta}^t(x_{t+1})\|^2\\&-\frac{\mu}{4}\|x_{t+1}-x\|^2.
    \end{split}
    \end{align}
    Note that \eqref{eq:lemma_proof_3} contains expressions that depend on the choice between $f-f_1$ and $g-g_1$ ($i_k$). We get rid of it by passing to the mathematical expectation. Let us consider some terms of \eqref{eq:lemma_proof_3} separately. We note that
    \begin{align}\label{eq:lemma_proof_4}
        \E_{i_t}\left[ \left\langle x-x_t, [\xi_t-\zeta_t]-[\nabla(h-h_1)(x_t)-\nabla(h-h_1)(x_0)]\right\rangle\right]=0,
    \end{align}
    since $x$, $x_t$ are do not depend on $i_t$ and $\xi_t-\zeta_t$ is unbiased estimator of $\nabla(h-h_1)(x_t)-\nabla(h-h_1)(x_0)$ (see our explanations in the main text). Moreover, carefully looking at $\frac{\alpha}{2}\|[\xi_t-\zeta_t]-[\nabla(h-h_1)(x_t)-\nabla(h-h_1)(x_0)]\|^2$, we notice
    \begin{align*}
        &\E_{i_t}\left[\frac{\alpha}{2}\|[\xi_t-\zeta_t]-[\nabla(h-h_1)(x_t)-\nabla(h-h_1)(x_0)]\|^2\right]\\\leq& \frac{\alpha}{2}\E_{i_t}\left[ \|\xi_t-\zeta_t\|^2 \right] \\\leq&\frac{\alpha}{2}\frac{1}{p}\left\|\nabla(f-f_1)(x_t)-\nabla(f-f_1)(x_0)\right\|^2\\&+\frac{\alpha}{2}\frac{1}{1-p}\left\|\nabla(g-g_1)(x_t)-\nabla(g-g_1(x_0)\right\|^2.
    \end{align*}
    Here, the first transition takes advantage of the fact that $\xi_t-\zeta_t$ estimates $\nabla(h-h_1)(x_t)-\nabla(h-h_1)(x_0)$ in the unbiased way. Given Hessian similarity (Assumption \ref{ass:relatedness}), this implies
    \begin{align}\label{eq:lemma_proof_5}
        \E_{i_t}\left[\frac{\alpha}{2}\|[\xi_t-\zeta_t]-[\nabla(h-h_1)(x_t)-\nabla(h-h_1)(x_0)]\|^2\right]\leq\frac{\alpha}{2}\left(\frac{\delta_f^2}{p}+\frac{\delta_g^2}{1-p}\right)\|x_t-x_0\|^2.
    \end{align}
    Substituting \eqref{eq:lemma_proof_4} and \eqref{eq:lemma_proof_5} into \eqref{eq:lemma_proof_3}, we obtain
    \begin{align}\label{eq:lemma_proof_7}
    \begin{split}
        \E_{i_t}\left[h(x_{t+1})-h(x)\right]\leq&\E_{i_t}\Bigg[D_{\psi}(x,x_t)-D_{\psi}(x,x_{t+1})-D_{\psi}(x_{t+1},x_t)+\frac{1}{2\alpha}\|x_{t+1}-x_t\|^2\\&+\frac{\alpha}{2}\left(\frac{\delta_f^2}{p}+\frac{\delta_g^2}{1-p}\right)\|x_t-x_0\|^2
        +\frac{1}{\mu}\|\nabla A_{\theta}^t(x_{t+1})\|^2-\frac{\mu}{4}\|x_{t+1}-x\|^2\Bigg].
    \end{split}
    \end{align}
    Since $\theta\leq\nicefrac{1}{2(\delta_f+\delta_g)}$, it holds that
    \begin{align}\label{eq:lemma_proof_6}
        0\leq\frac{1-\nicefrac{\theta}{(\delta_f+\delta_g)}}{2\theta}\|x-y\|^2\leq D_{\psi}(x,y)\leq\frac{1+\nicefrac{\theta}{(\delta_f+\delta_g)}}{2\theta}\|x-y\|^2,\quad\forall x,y\in\R^d.
    \end{align}
    Thus, we can estimate
    \begin{align*}
        -D_{\psi}(x_{t+1},x_t)\leq-\frac{1-\nicefrac{\theta}{(\delta_f+\delta_g)}}{2\theta}\|x_{t+1}-x_t\|^2.
    \end{align*}
    Substituting it into \eqref{eq:lemma_proof_7} and taking $\alpha=\frac{2\theta}{1-\nicefrac{\theta}{(\delta_f+\delta_g)}}$, we get
    \begin{align*}
        \E_{i_t}\left[h(x_{t+1})-h(x)\right]\leq&\E_{i_t}\Bigg[D_{\psi}(x,x_t)-D_{\psi}(x,x_{t+1})-\frac{1-\nicefrac{\theta}{(\delta_f+\delta_g)}}{4\theta}\|x_{t+1}-x_t\|^2\\&+\frac{1}{\mu}\|\nabla A_{\theta}^t(x_{t+1})\|^2+\frac{\theta}{1-\nicefrac{\theta}{(\delta_f+\delta_g)}}\left(\frac{\delta_f^2}{p}+\frac{\delta_g^2}{1-p}\right)\|x_t-x_0\|^2
        \\&-\frac{\mu}{4}\|x_{t+1}-x\|^2\Bigg].
    \end{align*}
    Since $\theta\leq\nicefrac{1}{2(\delta_f+\delta_g)}$, we have
    \begin{align*}
        -\frac{1-\nicefrac{\theta}{\delta_f+\delta_g}}{4\theta}\|x_{t+1}-x_t\|^2\leq-\frac{1}{8\theta}\|x_{t+1}-x_t\|^2.
    \end{align*}
    Further, we note that
    \begin{align*}
        -\|a-b\|^2\leq-\frac{1}{2}\|a-c\|^2+\|b-c\|^2.
    \end{align*}
    Combining all the remarks, we obtain
    \begin{align}\label{eq:lemma_proof_8}
    \begin{split}
        \E_{i_t}\left[h(x_{t+1})-h(x)\right]\leq&\E_{i_t}\Bigg[D_{\psi}(x,x_t)-D_{\psi}(x,x_{t+1})+\frac{\theta}{1-\nicefrac{\theta}{(\delta_f+\delta_g)}}\left(\frac{\delta_f^2}{p}+\frac{\delta_g^2}{1-p}\right)\|x_t-x_0\|^2
        \\&+\frac{1}{\mu}\|\nabla A_{\theta}^t(x_{t+1})\|^2-\frac{1}{16\theta}\|x_t-\arg\min_{x\in\R^d}A_{\theta}^t(x)\|^2\\&+\frac{1}{8\theta}\|x_{t+1}-\arg\min_{x\in\R^d}A_{\theta}^t(x)\|^2-\frac{\mu}{4}\|x_{t+1}-x\|^2\Bigg].
    \end{split}
    \end{align}
    Let us look carefully at the second row of the expression. Since $A_{\theta}^t$ is $\nicefrac{1}{\theta}$-strongly convex, it holds that
    \begin{align*}
        \frac{1}{8\theta}\|x_{t+1}-\arg\min_{x\in\R^d}A_{\theta}^t(x)\|^2\leq\frac{\theta}{8}\|\nabla A_{\theta}^t(x_{t+1})\|^2.
    \end{align*}
    Thus,
    \begin{align*}
        \frac{1}{\mu}&\|\nabla A_{\theta}^t(x_{t+1})\|^2-\frac{1}{16\theta}\|x_t-\arg\min_{x\in\R^d}A_{\theta}^t(x)\|^2+\frac{1}{8\theta}\|x_{t+1}-\arg\min_{x\in\R^d}A_{\theta}^t(x)\|^2\\\leq&\frac{8+\theta\mu}{8\mu}\left[\|\nabla A_{\theta}^t(x_{t+1})\|^2-\frac{\mu}{2\theta(\mu\theta+8)}\|x_t-\arg\min_{x\in\R^d}A_{\theta}^t(x)\|^2\right]\\\leq&\frac{8+\theta\mu}{8\mu}\left[\|\nabla A_{\theta}^t(x_{t+1})\|^2-\frac{\mu}{17\theta}\|x_t-\arg\min_{x\in\R^d}A_{\theta}^t(x)\|^2\right].
    \end{align*}
    Taking \eqref{eq:vrcs_criterion} into account, we get rid of this term in the obtained estimate. We rewrite \eqref{eq:lemma_proof_8} as
    \begin{align*}
        \E_{i_t}\left[h(x_{t+1})-h(x)\right]\leq&\E_{i_t}\Bigg[D_{\psi}(x,x_t)-D_{\psi}(x,x_{t+1})\\&+\frac{\theta}{1-\nicefrac{\theta}{(\delta_f+\delta_g)}}\left(\frac{\delta_f^2}{p}+\frac{\delta_g^2}{1-p}\right)\|x_t-x_0\|^2
        \\&-\frac{\mu}{4}\|x_{t+1}-x\|^2\Bigg].
    \end{align*}
    For $(T-1)$-th iteration we have
    \begin{align*}
        \E_{i_{T-1}}\left[h(x_{T})-h(x)\right]\leq&\E_{i_{T-1}}\Bigg[D_{\psi}(x,x_{T-1})-D_{\psi}(x,x_{T})\\&+\frac{\theta}{1-\nicefrac{\theta}{(\delta_f+\delta_g)}}\left(\frac{\delta_f^2}{p}+\frac{\delta_g^2}{1-p}\right)\|x_{T-1}-x_0\|^2
        \\&-\frac{\mu}{4}\|x_{T}-x\|^2\Bigg].
    \end{align*}
    As discussed above, $T-1$ is the geometrically distributed random variable. Thus, we can write the mathematical expectation by this quantity as well and use the tower-property. We have
    \begin{align*}
        \E\left[h(x_{T})-h(x)\right]\leq&\E\Bigg[D_{\psi}(x,x_{T-1})-D_{\psi}(x,x_{T})\\&+\frac{\theta}{1-\nicefrac{\theta}{(\delta_f+\delta_g)}}\left(\frac{\delta_f^2}{p}+\frac{\delta_g^2}{1-p}\right)\|x_{T-1}-x_0\|^2
        \\&-\frac{\mu}{4}\|x_{T}-x\|^2\Bigg].
    \end{align*}
    Using Lemma \ref{lem:geom_property}, we obtain
    \begin{align*}
        \E\left[h(x_{T})-h(x)\right]\leq&\E\Bigg[D_{\psi}(x,x_{0})-D_{\psi}(x,x_{T})+\frac{\theta}{1-\nicefrac{\theta}{(\delta_f+\delta_g)}}\left(\frac{\delta_f^2}{p}+\frac{\delta_g^2}{1-p}\right)\|x_{T}-x_0\|^2
        \\&-\frac{\mu}{4}\|x_{T}-x\|^2\Bigg].
    \end{align*}
    Taking $\theta\leq\nicefrac{1}{2(\delta_f+\delta_g)}$ and \eqref{eq:lemma_proof_6} into account, we write
    \begin{align*}
        \E \left[h(x_{T})-h(x)\right]\leq\E&\Bigg[ qD_{\psi}(x,x_0)-qD_{\psi}(x,x_{T})+8\theta^2\left( \frac{\delta_f^2}{p}+\frac{\delta_g^2}{1-p} \right)D_{\psi}(x_0,x_T)\\&-\frac{\mu\theta}{3}D_{\psi}(x,x_{T}) \Bigg].
    \end{align*}
    This is the required.
\end{proof}

\section{Proof of Theorem \ref{th:vrcs}}\label{ap:B}
Now we are ready to prove the convergence of \texttt{VRCS}.
\begin{algorithm}{\texttt{VRCS}}\label{alg:vrcs_full}
    \begin{algorithmic}[1]
        \State {\bf Input:} $x_0\in\R^d$
        \State {\bf Parameters:} $p,q\in(0,1),~\theta>0$
        \For{$ k = 0, \ldots, K-1$}
        \State $x_{k+1}=$\texttt{VRCS}$^{\texttt{1ep}}(p, q, \theta, x_k)$
        \EndFor
        \State \textbf{Output:} $x_K$
    \end{algorithmic}
\end{algorithm}
Let us repeat the statement.
\begin{theorem}{\textbf{(Theorem \ref{th:vrcs})}}
    Consider Algorithm \ref{alg:vrcs_full} for the problem \ref{prob_form_comp} under Assumptions \ref{ass:h}-\ref{ass:relatedness} and the conditions of Lemma \ref{lem:descent}, with the following tuning:
    \begin{align}\label{vrcs:choice}
        \theta=\frac{1}{4}\sqrt{\frac{p(1-p)q}{p\delta_g^2+(1-p)\delta_f^2}},\quad p=q=\frac{\delta_f^2}{\delta_f^2+\delta_g^2}.
    \end{align}
    Then the complexities in terms of communication rounds are
    \begin{align*}
        \mathcal{O}\left(\frac{\delta_f}{\mu}\log\frac{1}{\varepsilon}\right)\text{ for the nodes from $M_f$},
    \end{align*}
    and
    \begin{align*}
        \mathcal{O}\left(\left(\frac{\delta_g}{\delta_f}\right)\frac{\delta_g}{\mu}\log\frac{1}{\varepsilon}\right)\text{ for the nodes from $M_g$}.
    \end{align*}
\end{theorem}
\begin{proof}
    Let us apply Lemma \ref{lem:descent} twice (Note that $D_{\psi}(x_k,x_k)=0$):
    \begin{align*}
        \E\left[h(x_{k+1})-h(x_*)\right]\leq&\E\Bigg[ qD_{\psi}(x_*,x_k)-qD_{\psi}(x_*,x_{k+1})+8\theta^2\left( \frac{\delta_f^2}{p}+\frac{\delta_g^2}{1-p} \right)D_{\psi}(x_k,x_{k+1})\\&-\frac{\mu\theta}{3}D_{\psi}(x_*,x_{k+1}) \Bigg],
    \end{align*}
    \begin{align*}
        \E \left[h(x_{k+1})-h(x_k)\right]\leq&\E\Bigg[ -qD_{\psi}(x_k,x_{k+1})+8\theta^2\left( \frac{\delta_f^2}{p}+\frac{\delta_g^2}{1-p} \right)D_{\psi}(x_k,x_{k+1})\\&-\frac{\mu\theta}{3}D_{\psi}(x_k,x_{k+1}) \Bigg],
    \end{align*}
    We note that $-\frac{\mu\theta}{3}D_{\psi}(x_k,x_{k+1})\leq0$ due to the strong convexity of $\psi$ (see \eqref{eq:lemma_proof_6}). Summing up the above inequalities, we obtain
    \begin{align*}
        \E\left[ 2h(x_{k+1})-h(x_k)-h(x_*) \right]\leq&\E\Bigg[ qD_{\psi}(x_*,x_k)-\left(q+\frac{\mu\theta}{3}\right)D_{\psi}(x_*,x_{k+1})\\&+\left( 16\theta^2\left( \frac{\delta_f^2}{p}+\frac{\delta_g^2}{1-p} \right)-q \right)D_{\psi}(x_k,x_{k+1}) \Bigg].
    \end{align*}
    We have to get rid of $D_{\xi}(x_k,x_{k+1})$. Thus, we have to fine-tune $\theta$ as
    \begin{align*}
        \theta \leq \frac{\sqrt{p(1-p)q}}{4\sqrt{p\delta_g^2+(1-p)\delta_f^2}}.
    \end{align*}
    Thus, we have
    \begin{align*}
        \theta = \min\left\{ \frac{1}{2(\delta_f+\delta_g)}, \frac{\sqrt{p(1-p)q}}{4\sqrt{p\delta_g^2+(1-p)\delta_f^2}} \right\}.
    \end{align*}
    With choive of parameters given in \eqref{vrcs:choice}, we have 
    \begin{align*}
        \frac{\sqrt{p(1-p)q}}{4\sqrt{p\delta_g^2+(1-p)\delta_f^2}}\leq\frac{1}{2(\delta_f+\delta_g)},
    \end{align*}
    which indeed allows us to consider
    \begin{align*}
        \theta=\frac{\sqrt{p(1-p)q}}{4\sqrt{p\delta_g^2+(1-p)\delta_f^2}}.
    \end{align*}
    Thus, we have
    \begin{align*}
        \E\left[[h(x_{k+1})-h(x_*)]+q\left(1+\frac{\mu\theta}{3q}\right)D_{\psi}(x_*,x_{k+1})\right]\leq\E\left[qD_{\psi}(x_*,x_k)+\frac{1}{2}[h(x_k)-h(x_*)]  \right].
    \end{align*}
    With our choice of parameters (see \eqref{vrcs:choice}), we can note
    \begin{align*}
        \left( 1+\frac{\mu\theta}{3q} \right)^{-1}\leq1-\frac{\mu\theta}{6q}
    \end{align*}
    and conclude that Algorithm \ref{alg:vrcs_full} requires
    \begin{align*}
        \tilde{\mathcal{O}}\left(\frac{q}{\mu\theta}\right)\text{ iterations}
    \end{align*}
    to achieve an arbitrary $\varepsilon$-solution. Iteration of \texttt{VRCS} consists of the communication across all devices and then the epoch, at each iteration of which only $M_f$ or $M_g$ is engaged. The round length is on average $\nicefrac{1}{q}$. Thus, \texttt{VRCS} requires
    \begin{align*}
        \tilde{\mathcal{O}}\left(\frac{q}{\mu\theta}\left( 
1+\frac{p}{q} \right)\right)\text{ rounds for $M_f$},
    \end{align*}
    and
    \begin{align*}
        \tilde{\mathcal{O}}\left(\frac{q}{\mu\theta}\left( 
1+\frac{1-p}{q} \right)\right)\text{ rounds for $M_g$}.
    \end{align*}
    With our choice of parameters (see \ref{vrcs:choice}) we have
    \begin{align*}
        \tilde{\mathcal{O}}\left(\frac{\delta_f}{\mu}\right)\text{ rounds for $M_f$},
    \end{align*}
    and
    \begin{align*}
        \tilde{\mathcal{O}}\left(\left(\frac{\delta_g}{\delta_f}\right)\frac{\delta_g}{\mu}\right)\text{ rounds for $M_g$}.
    \end{align*}
\end{proof}
\begin{remark}
    The analysis of Algorithm \ref{alg:vrcs_full} allows different complexities to be obtained, thus allowing adaptation to the parameters of a particular problem. For example, by varying $p$ and $q$, one can get $\tilde{\mathcal{O}}\left(\frac{\delta_g}{\mu}\right)$, $\tilde{\mathcal{O}}\left(\frac{\delta_g}{\mu}\right)$ or $\tilde{\mathcal{O}}\left(\frac{\sqrt{\delta_f\delta_g}}{\mu}\right)$, $\tilde{\mathcal{O}}\left(\frac{\sqrt{\delta_f\delta_g}}{\mu}\right)$. Unfortunately, it is not possible to obtain $\tilde{\mathcal{O}}\left(\frac{\delta_f}{\mu}\right)$ over $M_f$ and $\tilde{\mathcal{O}}\left(\frac{\delta_g}{\mu}\right)$ over $M_g$ simultaneously.
\end{remark}

\section{Proof of Theorem \ref{th:accvrcs}}\label{ap:C}
\begin{theorem}{\textbf{(Theorem \ref{th:accvrcs})}}
    Consider Algorithm \ref{alg:accvrcs} for the problem \ref{prob_form_comp} under Assumptions \ref{ass:h}-\ref{ass:relatedness} and the conditions of Lemma \ref{lem:descent}, with the following tuning:
    \begin{align}\label{eq:accvrcs_choice}
        \theta=\frac{1}{4}\sqrt{\frac{p(1-p)q}{p\delta_g^2+(1-p)\delta_f^2}},\quad\tau=\sqrt{\frac{\theta\mu}{3q}},\quad\alpha=\sqrt{\frac{\theta}{3\mu q}},\quad 
p=q=\frac{\delta_f^2}{\delta_f^2+\delta_g^2}.
    \end{align}
    Then the complexities in terms of communication rounds are
    \begin{align*}
        \mathcal{O}\left(\sqrt{\frac{\delta_f}{\mu}}\log\frac{1}{\varepsilon}\right)\text{ for the nodes from $M_f$},
    \end{align*}
    and
    \begin{align*}
        \mathcal{O}\left(\left(\frac{\delta_g}{\delta_f}\right)^{\nicefrac{3}{2}}\sqrt{\frac{\delta_g}{\mu}}\log\frac{1}{\varepsilon}\right)\text{ for the nodes from $M_g$}.
    \end{align*}
\end{theorem}
\begin{proof}
    We start with Lemma \ref{lem:descent}, given that $y_{k+1}=\texttt{VRCS}^{\texttt{1ep}}(p, q, \theta, x_{k+1})$. Let us write
    \begin{align}\label{eq:accvrcs_proof_1}
    \begin{split}
        \E\left[h(y_{k+1})-h(x)\right]\leq&\E\Bigg[ qD_{\psi}(x,x_{k+1})-qD_{\psi}(x,y_{k+1})+8\theta^2\left(\frac{\delta_f^2}{p}+\frac{\delta_g^2}{1-p}\right)D_{\psi}(x_{k+1},y_{k+1}) \\&-\frac{\mu}{4}\|y_{k+1}-x\|^2\Bigg].
    \end{split}
    \end{align}
    Using three-point equality (see Lemma \ref{lem:three-point}), we note
    \begin{align*}
         qD_{\psi}(x,x_{k+1})-qD_{\psi}(x,y_{k+1})=q\langle 
x-x_{k+1},\nabla\psi(y_{k+1})-\nabla\psi(x_{k+1}) \rangle-qD_{\psi}(x_{k+1},y_{k+1}).
    \end{align*}
    Substituting it into \eqref{eq:accvrcs_proof_1}, we obtain
    \begin{align*}
        \E\left[h(y_{k+1})-h(x)\right]\leq&\E\Bigg[ q\langle 
        x-x_{k+1},\nabla\psi(y_{k+1})-\nabla\psi(x_{k+1}) \rangle-\frac{\mu}{4}\|y_{k+1}-x\|^2\\&+\left\{8\theta^2\left(\frac{\delta_f^2}{p}+\frac{\delta_g^2}{1-p}\right)-q\right\}D_{\psi}(x_{k+1},y_{k+1}) \Bigg].
    \end{align*}
    With our choice of $\theta$ (see \eqref{eq:accvrcs_choice}), we have
    \begin{align*}
        \left\{8\theta^2\left(\frac{\delta_f^2}{p}+\frac{\delta_g^2}{1-p}\right)-q\right\}D_{\psi}(x_{k+1},y_{k+1})\leq-\frac{q}{2}D_{\psi}(x_{k+1},y_{k+1}).
    \end{align*}
    Thus, we can write
    \begin{align*}
        \E\left[h(y_{k+1})-h(x)\right]\leq&\E\Bigg[ q\langle 
        x-x_{k+1},\nabla\psi(y_{k+1})-\nabla\psi(x_{k+1}) \rangle-\frac{\mu}{4}\|y_{k+1}-x\|^2\\&-\frac{q}{2}D_{\psi}(x_{k+1},y_{k+1}) \Bigg].
    \end{align*}
    We suggest to add and subtract $z_k$ in the scalar product to get
    \begin{align}\label{eq:accvrcs_proof_2}
    \begin{split}
        \E\left[h(y_{k+1})-h(x)\right]\leq&\E\Bigg[ q\langle 
        x-z_k,\nabla\psi(y_{k+1})-\nabla\psi(x_{k+1}) \rangle\\&+q\langle 
        z_k-x_{k+1},\nabla\psi(y_{k+1})-\nabla\psi(x_{k+1}) \rangle\\&-\frac{\mu}{4}\|y_{k+1}-x\|^2-\frac{q}{2}D_{\psi}(x_{k+1},y_{k+1}) \Bigg].
    \end{split}
    \end{align}
    Looking carefully at Line \ref{eq:accvrcs_combine}, we note that
    \begin{align*}
        z_k-x_{k+1}=\frac{1-\tau}{\tau}(x_{k+1}-y_k).
    \end{align*}
    Substituting it into \eqref{eq:accvrcs_proof_2}, we get
    \begin{align}\label{eq:accvrcs_proof_3}
    \begin{split}
        \E\left[h(y_{k+1})-h(x)\right]\leq&\E\Bigg[ q\langle 
        x-z_k,\nabla\psi(y_{k+1})-\nabla\psi(x_{k+1}) \rangle\\&+\frac{1-\tau}{\tau}q\langle 
        x_{k+1}-y_k,\nabla\psi(y_{k+1})-\nabla\psi(x_{k+1}) \rangle\\&-\frac{\mu}{4}\|y_{k+1}-x\|^2-\frac{q}{2}D_{\psi}(x_{k+1},y_{k+1}) \Bigg].
    \end{split}
    \end{align}
    Next, let us analyze $q\langle 
        x_{k+1}-y_k,\nabla\psi(y_{k+1})-\nabla\psi(x_{k+1}) \rangle$. It is not difficult to see that this term appears in Lemma \ref{lem:descent} if we substitute $x=y_k$. Writing it out, we obtain
    \begin{align*}
        \E\left[q\langle 
        x_{k+1}-y_k,\nabla\psi(y_{k+1})-\nabla\psi(x_{k+1}) \rangle\right]\leq \E\left[ [h(y_k)-h(y_{k+1})]-\frac{q}{2}D_{\psi}(x_{k+1},y_{k+1}) \right].
    \end{align*}
    Plugging this into \eqref{eq:accvrcs_proof_3}, we derive
    \begin{align*}
        \E\left[h(y_{k+1})-h(x)\right]\leq&\E\Bigg[ q\langle 
        x-z_k,\nabla\psi(y_{k+1})-\nabla\psi(x_{k+1}) \rangle+\frac{1-\tau}{\tau}[h(y_k)-h(y_{k+1})]\\&-\frac{\mu}{4}\|y_{k+1}-x\|^2-\frac{q}{2\tau}D_{\psi}(x_{k+1},y_{k+1}) \Bigg].
    \end{align*}
    Note that $G_{k+1}=q(\nabla\psi(x_{k+1})-\nabla\psi(y_{k+1}))$. Thus, we have
    \begin{align}\label{eq:accrvcs_proof_4}
    \begin{split}
        \E\left[h(y_{k+1})-h(x)\right]\leq&\E\Bigg[ \langle 
        z_k-x,G_{k+1} \rangle+\frac{1-\tau}{\tau}[h(y_k)-h(y_{k+1})]-\frac{\mu}{4}\|y_{k+1}-x\|^2\\&-\frac{q}{2\tau}D_{\psi}(x_{k+1},y_{k+1}) \Bigg].
    \end{split}
    \end{align}
    Next, we apply Lemma \ref{lem:subtask} to Line \ref{accvrcs:suproblem} in order to evaluate
    \begin{align}\label{eq:accvrcs_proof_5}
    \begin{split}
        \langle z_k-x,G_{k+1} \rangle-\frac{\mu}{4}\|y_{k+1}-x\|^2\leq&\langle z_k-x,G_{k+1} \rangle-\frac{\mu}{4}\|y_{k+1}-x\|^2+\frac{\mu}{4}\|y_{k+1}-z_{k+1}\|^2\\\leq&\frac{\alpha}{2}\|G_{k+1}\|^2+\frac{1}{2\alpha}\|z_k-x\|^2+\frac{1+0.5\alpha}{2\alpha}\|z_{k+1}-x\|^2.
    \end{split}
    \end{align}
    We also have to estimate $\|G_{k+1}\|^2$. 
    \begin{align}\label{eq:accvrcs_proof_6}
    \begin{split}
        \|G_{k+1}\|^2\leq& q^2\|\nabla\psi(x_{k+1})-\nabla\psi(y_{k+1})\|^2\leq q^2\frac{2(1+\theta(\delta_f+\delta_g))}{\theta}D_{\psi}(x_{k+1},y_{k+1})\\\leq&\frac{3q^2}{\theta}D_{\psi}(x_{k+1},y_{k+1}).
    \end{split}
    \end{align}
    Substituting \eqref{eq:accvrcs_proof_6} and \eqref{eq:accvrcs_proof_5} into \eqref{eq:accrvcs_proof_4}, we conclude
    \begin{align*}
        \E\left[ \frac{1}{\tau}[h(y_{k+1})-h(x)] +\frac{1+0.5\mu\alpha}{2\alpha}\|z_{k+1}-x\|^2 \right]\leq&\E\Bigg[ \frac{1-\tau}{\tau}[h(y_k)-h(x)]+\frac{1}{2\alpha}\|z_k-x\|^2\\&-\frac{q}{2\tau}\left( 1-\frac{3\alpha q\tau}{\theta} \right)D_{\psi}(x_{k+1},y_{k+1}) \Bigg].
    \end{align*}
    With our choice of parameters (see \eqref{eq:accvrcs_choice}), we have 
    \begin{align*}
        1-\frac{3\alpha q\tau}{\theta}=0
    \end{align*}
    Moreover, $\mu\alpha<1$ (with our choice of $\alpha$) and therefore,
    \begin{align*}
        \left(1+\frac{\mu\alpha}{2}\right)^{-1}\leq1-\frac{\mu\alpha}{4}.
    \end{align*}
    Thus, we conclude that Algorithm \ref{alg:accvrcs} requires
    \begin{align*}
        \tilde{\mathcal{O}}\left(\sqrt{\frac{q}{\theta\mu}}\right)\text{ iterations}
    \end{align*}
    to achieve an arbitrary $\varepsilon$-solution. The same as in Algorithm \ref{alg:vrcs}, iteration of \texttt{AccVRCS} consists of the communication across all devices and then the epoch with random choice of $M_f$ or $M_g$. Thus, \texttt{AccVRCS} requires
    \begin{align*}
        \tilde{\mathcal{O}}\left(\sqrt{\frac{q}{\mu\theta}}\left( 
1+\frac{p}{q} \right)\right)\text{ rounds for $M_f$},
    \end{align*}
    and
    \begin{align*}
        \tilde{\mathcal{O}}\left(\sqrt{\frac{q}{\mu\theta}}\left( 
1+\frac{1-p}{q} \right)\right)\text{ rounds for $M_g$}.
    \end{align*}
    After substituting \eqref{eq:accvrcs_choice}, this results in
    \begin{align*}
        \tilde{\mathcal{O}}\left(\sqrt{\frac{\delta_f}{\mu}}\right)\text{ rounds for $M_f$},
    \end{align*}
    and
    \begin{align*}
        \tilde{\mathcal{O}}\left(\left(\frac{\delta_g}{\delta_f}\right)^{\nicefrac{3}{2}}\sqrt{\frac{\delta_g}{\mu}}\right)\text{ rounds for $M_g$}.
    \end{align*}
\end{proof}

\section{Proof of Theorem \ref{th:extragrad}}\label{ap:D}
\begin{theorem}
    Consider Algorithm \ref{alg:extragrad} for the problem \ref{eq:subproblem_first} under Assumptions \ref{ass:relatedness}-\ref{ass:g_strong}, with the following tuning:
    \begin{align}\label{eq:extragrad_choice}
        \theta_g\leq\frac{1}{2\delta_g},\quad\tau_g=\frac{1}{2}\sqrt{\frac{\theta_g}{\theta}},\quad\alpha_g=\frac{1}{\theta},\quad\eta_g=\min\left\{ \frac{\theta}{2},\frac{1}{4}\frac{\theta_g}{\tau_g} \right\};
    \end{align}
    and let $\overline{x}_{t+1}$ satisfy:
    \begin{align*}
        \left\| B_{\theta_g}^t(\overline{x}_{t+1}) \right\|^2\leq\frac{2}{10\theta_g^2}\left\| \underline{x}_t-\arg\min_{x\in\R^d}B_{\theta_g}^t(x) \right\|^2.
    \end{align*}
    Then it takes 
    \begin{align*}
        \mathcal{O}\left( \sqrt{\theta\delta_g}\log\frac{1}{\varepsilon} \right)\text{ communication rounds}
    \end{align*}
    over only $M_g$ to achieve $\|\nabla A_{\theta_f}^k(\overline{x}_{t+1})\|^2\leq\varepsilon$.
\end{theorem}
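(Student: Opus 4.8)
The plan is to read Algorithm \ref{alg:extragrad} as a deterministic instance of \texttt{Accelerated Extragradient} --- the very scheme whose Lyapunov analysis underlies the proof of Theorem \ref{th:stoch_extragrad} --- now applied to the \emph{inner} objective $A_{\theta}^k = q_g + p_g$ (writing $\theta$ for $\theta_f$), with $p_g = g-g_1$ as the ``related'' part and $q_g$, built only from the server functions $f_1,g_1$, the quadratic and an affine term, as the communication-free base. Before any estimate I would record three facts. First, $A_{\theta}^k = \tfrac{1}{2\theta}\|x-\underline{x}_k\|^2 + (f_1+g)(x) + \text{affine}$ is exactly $\tfrac1\theta$-strongly convex, because Assumption \ref{ass:g_strong} makes $q=f_1+g$ convex; this $\tfrac1\theta$ is the strong-convexity modulus of the \emph{total} inner objective and will play the role that $\mu$ plays in Theorem \ref{th:stoch_extragrad}. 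Second, $\|\nabla^2 p_g(x)-\nabla^2 p_g(y)\| = \|\nabla^2(g-g_1)\| \le \delta_g$ by Assumption \ref{ass:relatedness}, so $p_g$ is the $\delta_g$-related part (the analog of $h-h_1$). Third, although $q_g$ is itself only $(\delta_g-\delta_f)$-weakly convex, the choice $\theta_g \le \tfrac{1}{2\delta_g}$ forces $\nabla^2 B_{\theta_g}^t \succeq (\tfrac{1}{\theta_g}-(\delta_g-\delta_f))I \succeq \delta_g I$, so the inner subproblem is strongly convex and $\arg\min_x B_{\theta_g}^t(x)$ and the stopping test are well posed.

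With this dictionary I would replay the descent of Theorem \ref{th:stoch_extragrad} with $\sigma=0$ and the substitutions $h\leftrightarrow A_{\theta}^k$, $\mu\leftrightarrow \tfrac1\theta$, $h_1\leftrightarrow q_g$, $(h-h_1)\leftrightarrow p_g$, $(\delta_f+\delta_g)\leftrightarrow\delta_g$, and $(\theta,\tau,\alpha,\eta)\leftrightarrow(\theta_g,\tau_g,\alpha_g,\eta_g)$. Concretely: expand $\tfrac{1}{\eta_g}\|x_{t+1}-x_*\|^2$ through Line \ref{eq:extragrad_step}; apply $\tfrac1\theta$-strong convexity of $A_{\theta}^k$ to $2\langle\nabla A_{\theta}^k(\overline{x}_{t+1}),x_*-\underline{x}_t\rangle$ exactly as in \eqref{eq:stoch_extragrad_proof_3}; then, as in \eqref{eq:stoch_extragrad_proof_4}, use the identity $\tfrac{\overline{x}_{t+1}-\underline{x}_t}{\theta_g}+\nabla A_{\theta}^k(\overline{x}_{t+1}) = \nabla B_{\theta_g}^t(\overline{x}_{t+1}) + \bigl(\nabla p_g(\overline{x}_{t+1})-\nabla p_g(\underline{x}_t)\bigr)$, bounding the last difference by $\delta_g\|\overline{x}_{t+1}-\underline{x}_t\|$ via fact two. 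The residual is absorbed through $\tfrac{1}{\theta_g}$-strong convexity of $B_{\theta_g}^t$ together with the subproblem criterion $\|\nabla B_{\theta_g}^t(\overline{x}_{t+1})\|^2 \le \tfrac{2}{10\theta_g^2}\|\underline{x}_t-\arg\min_x B_{\theta_g}^t(x)\|^2$, and with $\theta_g\le\tfrac{1}{2\delta_g}$ the coefficient of $\|\overline{x}_{t+1}-\underline{x}_t\|^2$ stays non-positive, exactly as the step $\theta\le\tfrac{1}{3(\delta_f+\delta_g)}$ does there. Assembling $\Phi_t = \tfrac{\tau_g}{\eta_g}\|x_t-x_*\|^2 + [A_{\theta}^k(\overline{x}_t)-A_{\theta}^k(x_*)]$ and inserting the tuning \eqref{eq:extragrad_choice} gives the contraction $\Phi_{t+1}\le(1-\tfrac12\sqrt{\theta_g/\theta})\Phi_t$, hence $\mathcal{O}(\sqrt{\theta/\theta_g}\log\tfrac1\varepsilon)$ iterations; taking $\theta_g=\Theta(1/\delta_g)$ turns $\sqrt{\theta/\theta_g}$ into $\sqrt{\theta\delta_g}$.

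It remains to convert this potential bound into the \emph{gradient-norm} criterion \eqref{eq:subproblem_first} demanded by the outer loop, and to count rounds. Writing $x_* = \arg\min_x A_{\theta}^k(x)$, we have $\nabla A_{\theta}^k(x_*)=0$; if $A_{\theta}^k$ is $L_A$-smooth (with $L_A = \tfrac1\theta + L$, $L$ bounding $\nabla^2(f_1+g)$) then $\|\nabla A_{\theta}^k(\overline{x}_t)\|^2 \le L_A^2\|\overline{x}_t-x_*\|^2$, while $\tfrac1\theta$-strong convexity gives $\|\overline{x}_t-x_*\|^2 \le 2\theta\,[A_{\theta}^k(\overline{x}_t)-A_{\theta}^k(x_*)] \le 2\theta\,\Phi_t$. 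Linear decay of $\Phi_t$ then yields $\|\nabla A_{\theta}^k(\overline{x}_t)\|^2 \le \varepsilon$ after $\mathcal{O}(\sqrt{\theta\delta_g}\log\tfrac1\varepsilon)$ iterations, the factor $L_A$ entering only inside the logarithm. Finally, each iteration touches $M_g$ solely through $\nabla(g-g_1)(\underline{x}_t)$ and $\nabla A_{\theta}^k(\overline{x}_{t+1})$ (the latter needing $\nabla(g-g_1)(\overline{x}_{t+1})$), whereas $B_{\theta_g}^t$ is minimized entirely on the server; so the iteration count equals, up to a constant, the number of communication rounds over $M_g$, which is the claim.

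The main obstacle is exactly the conversion flagged by the authors: the native \texttt{Accelerated Extragradient} guarantee is a statement about the potential $\Phi_t$, whereas \eqref{eq:subproblem_first} --- the interface through which this inner solver plugs into Algorithm \ref{alg:comp_extragrad} --- is phrased in $\|\nabla A_{\theta}^k(\overline{x}_t)\|^2$. One must therefore either carry a $-c\,\theta_g\|\nabla A_{\theta}^k(\overline{x}_{t+1})\|^2$ term through the descent (the analog of the $-\theta\|\nabla h(\overline{x}_{k+1})\|^2$ appearing in \eqref{eq:stoch_extragrad_proof_5}) or post-process via smoothness as above, in each case checking that the extra $L_A$ factor lands inside the $\log$ and does not corrupt the $\sqrt{\theta\delta_g}$ leading order. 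A secondary, easily missed point is that $q_g$ is genuinely non-convex here --- this is the \textit{convex$+$non-convex$=$convex} regime --- so fact three ($\theta_g\le\tfrac{1}{2\delta_g}$ keeps $B_{\theta_g}^t$ strongly convex) must be established before any subproblem estimate is invoked.
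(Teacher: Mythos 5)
Your proposal follows essentially the same route as the paper's proof: Appendix \ref{ap:D} likewise replays the \texttt{Accelerated Extragradient} analysis of Theorem \ref{th:stoch_extragrad} deterministically ($\sigma=0$) under exactly your dictionary $h\leftrightarrow A_{\theta}^k$, $h_1\leftrightarrow q_g$, $h-h_1\leftrightarrow p_g=g-g_1$, $\mu\leftrightarrow\nicefrac{1}{\theta}$, uses the same identity $\nabla A_{\theta}^k(\overline{x}_{t+1})+\nicefrac{(\overline{x}_{t+1}-\underline{x}_t)}{\theta_g}=\nabla B_{\theta_g}^t(\overline{x}_{t+1})+\nabla p_g(\overline{x}_{t+1})-\nabla p_g(\underline{x}_t)$ with the $\delta_g$ bound, absorbs the residual via strong convexity of $B_{\theta_g}^t$ and the stopping criterion, and lands on the same contraction factor $\bigl(1-\tfrac{1}{2}\sqrt{\nicefrac{\theta_g}{\theta}}\bigr)$ for the same potential (up to a harmless rescaling by $\tau_g$). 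The one place you diverge is the conversion to the gradient-norm criterion \eqref{eq:subproblem_first}. Your primary route (post-processing via $L_A$-smoothness of $A_{\theta}^k$) is not available under the stated assumptions: Assumptions \ref{ass:relatedness}--\ref{ass:g_strong} bound only \emph{differences} of Hessians, so no finite $L_A$ bounding $\nabla^2(f_1+g)$ is guaranteed, and invoking one silently adds an assumption (even if it would only enter the logarithm). The paper instead takes precisely the alternative you name at the end: the choice $\eta_g\leq\nicefrac{\theta_g}{(4\tau_g)}$ leaves a surviving $-\tfrac{\theta_g}{2\tau_g}\|\nabla A_{\theta}^k(\overline{x}_{t+1})\|^2$ on the left of the recursion, so that
\begin{align*}
\Phi_{t+1}+\frac{\theta_g}{2\tau_g}\|\nabla A_{\theta}^k(\overline{x}_{t+1})\|^2\leq\left(1-\frac{1}{2}\sqrt{\frac{\theta_g}{\theta}}\right)\left[\Phi_t+\frac{\theta_g}{2\tau_g}\|\nabla A_{\theta}^k(\overline{x}_t)\|^2\right],
\end{align*}
and unrolling with $\Phi_K\geq 0$ gives linear decay of the gradient norm directly, with no smoothness constant anywhere. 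You should promote that alternative to the main argument; with it, your write-up matches the paper's proof. Your preliminary "fact three" (that $\theta_g\leq\nicefrac{1}{2\delta_g}$ keeps $B_{\theta_g}^t$ strongly convex despite the weak convexity of $q_g$) is a correct and slightly more careful justification than the one-line claim in the paper.
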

\begin{proof}
    The proof is much the same as the proof of Theorem \ref{th:stoch_extragrad} (see Appendix \ref{ap:A}). Nevertheless, we give it in the full form.
    We start with
    \begin{align*}
        \frac{1}{\eta_g}\|x_{t+1}-x_*\|^2=\frac{1}{\eta_g}\|x_t-x_*\|^2+\frac{2}{\eta_g}\langle 
        x_{t+1}-x_t,x_t-x_* \rangle+\frac{1}{\eta_g}\|x_{t+1}-x_t\|^2.
    \end{align*}
    Next, we use Line \ref{eq:extragrad_step} to obtain
    \begin{align*}
        \frac{1}{\eta_g}\|x_{t+1}-x_*\|^2=&\frac{1}{\eta_g}\|x_t-x_*\|^2+2\alpha_g\langle 
        \overline{x}_{t+1}-x_t,x_t-x_* \rangle+2\langle 
        \nabla A_{\theta}^k(\overline{x}_{t+1}),x_t-x_* \rangle\\&+\frac{1}{\eta_g}\|x_{t+1}-x_t\|^2.
    \end{align*}
    After that, we apply the formula for square of difference to the first scalar product and get
    \begin{align*}
        \frac{1}{\eta_g}\|x_{t+1}-x_*\|^2=&\frac{1}{\eta_g}\|x_t-x_*\|^2+\alpha_g\|\overline{x}_{t+1}-x_*\|^2-\alpha_g\|\overline{x}_{t+1}-x_t\|^2\\&-\alpha_g\|x_t-x_*\|^2+2\langle 
        \nabla A_{\theta}^k(\overline{x}_{t+1}),x_t-x_* \rangle+\frac{1}{\eta_g}\|x_{t+1}-x_t\|^2.
    \end{align*}
    Let us take a closer look at the last norm. Using Line \ref{eq:extragrad_step}, we obtain
    \begin{align*}
        \frac{1}{\eta_g}\|x_{t+1}-x_t\|^2\leq2\eta_g\alpha_g^2\|\overline{x}_{t+1}-x_t\|^2+2\eta\|\nabla A_{\theta}^k(\overline{x}_{t+1})\|^2.
    \end{align*}
    Taking the choice of parameters (see \eqref{eq:extragrad_choice}) into account, we can write
    \begin{align*}
        \frac{1}{\eta_g}\|x_{t+1}-x_*\|^2\leq&\frac{1-\eta_g\alpha_g}{\eta_g}\|x_t-x_*\|^2+\alpha_g\|\overline{x}_{t+1}-x_*\|^2+2\langle 
        \nabla A_{\theta}^k(\overline{x}_{t+1}),x_t-x_* \rangle\\&+2\eta_g\|\nabla A_{\theta}^k(\overline{x}_{k+1})\|^2.
    \end{align*}
    It remains to use Line \ref{eq:extragrad_combine} to write out the remaining scalar product. We have
    \begin{align}\label{eq:extragrad_proof_1}
    \begin{split}
        \frac{1}{\eta_g}\|x_{t+1}-x_*\|^2\leq&\frac{1-\eta_g\alpha_g}{\eta_g}\|x_t-x_*\|^2+\alpha_g\|\overline{x}_{t+1}-x_*\|^2+2\langle 
        \nabla A_{\theta}^k(\overline{x}_{t+1}),x_*-\underline{x}_t \rangle\\&+\frac{2(1-\tau_g)}{\tau_g}\langle 
        \nabla A_{\theta}^k(\overline{x}_{t+1}),\overline{x}_t-\underline{x}_t \rangle+2\eta_g\|\nabla A_{\theta}^k(\overline{x}_{k+1})\|^2.
    \end{split}
    \end{align}
    As in the proof of Theorem \ref{th:stoch_extragrad}, we have to deal with the scalar products. Let us write
    \begin{align*}
        \langle \nabla A_{\theta}^k(\overline{x}_{t+1}),x-\underline{x}_t \rangle = \langle \nabla A_{\theta}^k(\overline{x}_{t+1}),x-\overline{x}_{t+1} \rangle+\langle \nabla A_{\theta}^k(\overline{x}_{t+1}),\overline{x}_{t+1}-\underline{x}_t \rangle.
    \end{align*}
    We have already mentioned in the main text that $A_{\theta}^k$ is $\nicefrac{1}{\theta}$-strongly convex. Indeed,
    $\nabla^2A_{\theta}^k(x)\succeq\frac{1}{\theta}$. Thus, we can apply the definition of strong convexity (see Assumption \ref{ass:h}) to the first scalar product:
    \begin{align*}
        \langle \nabla A_{\theta}^k(\overline{x}_{t+1}),x-\underline{x}_t \rangle \leq [A_{\theta}^k(\overline{x}_{t+1})-A_{\theta}^k(x)]-\frac{1}{\theta}\|\overline{x}_{t+1}-x\|^2+\theta_g\left\langle \nabla A_{\theta}^k(\overline{x}_{t+1}),\frac{\overline{x}_{t+1}-\underline{x}_t}{\theta_g} \right\rangle.
    \end{align*}
    Next, we write out the remaining scalar product, exploiting the square of the difference, and obtain
    \begin{align}\label{eq:extragrad_proof_2}
    \begin{split}
        \langle \nabla A_{\theta}^k(\overline{x}_{t+1}),x-\underline{x}_t \rangle \leq& [A_{\theta}^k(\overline{x}_{t+1})-A_{\theta}^k(x)]-\frac{1}{\theta}\|\overline{x}_{t+1}-x\|^2-\theta_g\|\nabla A_{\theta}^k(\overline{x}_{t+1})\|^2\\&-\frac{1}{\theta_g}\|\overline{x}_{t+1}-\underline{x}_{t}\|^2+\theta_g\left\|\nabla A_{\theta}^k(\overline{x}_{t+1})+\frac{\overline{x}_{t+1}-\underline{x}_t}{\theta_g}\right\|^2.
    \end{split}
    \end{align}
    Let us look carefully at the last norm and notice
    \begin{align*}
        \left\|\nabla A_{\theta}^k(\overline{x}_{t+1})+\frac{\overline{x}_{t+1}-\underline{x}_t}{\theta_g}\right\|^2=&\left\|\nabla q_g(\overline{x}_{t+1})+\nabla(g-g_1)(\overline{x}_{t+1})+\frac{\overline{x}_{t+1}-\underline{x}_t}{\theta_g}\right\|^2\\=&\left\|\nabla B_{\theta_g}^t(\overline{x}_{t+1})+\nabla(g-g_1)(\overline{x}_{t+1})-\nabla(g-g_1)(\underline{x}_t)\right\|^2\\\leq&2\|\nabla B_{\theta_g}^t(\overline{x}_{t+1})\|^2+2\|\nabla(g-g_1)(\overline{x}_{t+1})-\nabla(g-g_1)(\underline{x}_t)\|^2.
    \end{align*}
    Next, we apply the Hessian similarity (see Definition \ref{sim_def}) and obtain
    \begin{align}\label{eq:extragrad_proof_3}
        \left\|\nabla A_{\theta}^k(\overline{x}_{t+1})+\frac{\overline{x}_{t+1}-\underline{x}_t}{\theta_g}\right\|^2\leq2\|\nabla B_{\theta_g}^t(\overline{x}_{t+1})\|^2+2\delta_g^2\|\overline{x}_{t+1}-\underline{x}_t\|^2
    \end{align}
    Substituting \eqref{eq:extragrad_proof_3} into \eqref{eq:extragrad_proof_2}, we get
    \begin{align*}
        \langle \nabla A_{\theta}^k(\overline{x}_{t+1}),x-\underline{x}_t \rangle \leq& [A_{\theta}^k(\overline{x}_{t+1})-A_{\theta}^k(x)]-\frac{1}{\theta}\|\overline{x}_{t+1}-x\|^2-\theta_g\|\nabla A_{\theta}^k(\overline{x}_{t+1})\|^2\\&+2\theta_g\|\nabla B_{\theta_g}^t(\overline{x}_{t+1})\|^2-\frac{1}{\theta_g}\left(1-2\theta_g^2\delta_g^2\right)\|\overline{x}_{t+1}-\underline{x}_t\|^2. 
    \end{align*}
    With $\theta\leq\nicefrac{1}{2\theta_g}$ (see \eqref{eq:extragrad_choice}), we have
    \begin{align*}
        \langle \nabla A_{\theta}^k(\overline{x}_{t+1}),x-\underline{x}_t \rangle \leq& [A_{\theta}^k(\overline{x}_{t+1})-A_{\theta}^k(x)]-\frac{1}{\theta}\|\overline{x}_{t+1}-x\|^2-\theta_g\|\nabla A_{\theta}^k(\overline{x}_{t+1})\|^2\\&+2\theta_g\|\nabla B_{\theta_g}^t(\overline{x}_{t+1})\|^2-\frac{1}{2\theta_g}\|\overline{x}_{t+1}-\underline{x}_t\|^2. 
    \end{align*}
    Note that 
    \begin{align*}
        -\|a-b\|^2\leq-\frac{1}{2}\|a-c\|^2+\|b-c\|^2.
    \end{align*}
    Thus, we can write
    \begin{align*}
        \langle \nabla A_{\theta}^k(\overline{x}_{t+1}),x-\underline{x}_t \rangle \leq& [A_{\theta}^k(\overline{x}_{t+1})-A_{\theta}^k(x)]-\frac{1}{\theta}\|\overline{x}_{t+1}-x\|^2-\theta_g\|\nabla A_{\theta}^k(\overline{x}_{t+1})\|^2\\&+2\theta_g\|\nabla B_{\theta_g}^t(\overline{x}_{t+1})\|^2-\frac{1}{4\theta_g}\|\underline{x}_t-\arg\min_{x\in\R^d}B_{\theta_g}^t(x)\|^2\\&+\frac{1}{2\theta_g}\|\overline{x}_{t+1}-\arg\min_{x\in\R^d}B_{\theta_g}^t(x)\|^2. 
    \end{align*}
    $B_{\theta_g}^t$ is $\nicefrac{1}{\theta_g}$-strongly convex. This implies
    \begin{align*}
        \langle \nabla A_{\theta}^k(\overline{x}_{t+1}),x-\underline{x}_t \rangle \leq& [A_{\theta}^k(\overline{x}_{t+1})-A_{\theta}^k(x)]-\frac{1}{\theta}\|\overline{x}_{t+1}-x\|^2-\theta_g\|\nabla A_{\theta}^k(\overline{x}_{t+1})\|^2\\&+\frac{5\theta_g}{2}\|\nabla B_{\theta_g}^t(\overline{x}_{t+1})\|^2-\frac{1}{4\theta_g}\|\underline{x}_t-\arg\min_{x\in\R^d}B_{\theta_g}^t(x)\|^2. 
    \end{align*}
    The criterion helps to eliminate the last two terms. We conclude
    \begin{align*}
        \langle \nabla A_{\theta}^k(\overline{x}_{t+1}),x-\underline{x}_t \rangle \leq& [A_{\theta}^k(\overline{x}_{t+1})-A_{\theta}^k(x)]-\frac{1}{\theta}\|\overline{x}_{t+1}-x\|^2-\theta_g\|\nabla A_{\theta}^k(\overline{x}_{t+1})\|^2.
    \end{align*}
    We are ready to estimate the scalar products in \eqref{eq:extragrad_proof_1}. Let us write
    \begin{align*}
        \frac{1}{\eta_g}\|x_{t+1}-x_*\|^2\leq&\frac{1-\eta_g\alpha_g}{\eta_g}\|x_t-x_*\|^2+\left(\alpha_g-\frac{1}{\theta}\right)\|\overline{x}_{t+1}-x_*\|^2\\&+\left( 2\eta_g-\frac{\theta_g}{\tau_g} \right)\|\nabla A_{\theta_g}^k(\overline{x}_{t+1})\|^2\\&+[A_{\theta}^k(\overline{x}_{t+1})-A_{\theta}^k(x_*)]+\frac{1-\tau_g}{\tau_g}[A_{\theta}^k(\overline{x}_{t+1})-A_{\theta}^k(\overline{x}_{t})].
    \end{align*}
    Denote $\Phi_k=\frac{1}{\eta_g}\|x_{t}-x_*\|^2+\frac{1}{\tau_g}[A_{\theta}^k(\overline{x}_{t})-A_{\theta}^k(x_*)]$
    With the proposed choice of parameters (see \eqref{eq:extragrad_choice}), we have
    \begin{align*}
        \Phi_{k+1}+\frac{\theta_g}{2\tau_g}\|\nabla A_{\theta}^k(\overline{x}_{k+1})\|^2\leq&\left( 1-\frac{1}{2}\sqrt{\frac{\theta_g}{\theta}} \right)\Phi_k\\\leq&\left( 1-\frac{1}{2}\sqrt{\frac{\theta_g}{\theta}} \right)\left[\Phi_k+\frac{\theta_g}{2\tau_g}\|\nabla A_{\theta}^k(\overline{x}_t)\|^2\right].
    \end{align*}
    Rolling out the recursion and noting that $\frac{\theta_g}{2\tau_g}\|\nabla A_{\theta}^k(\overline{x}_{K})\|^2\leq\Phi_{K}+\frac{\theta_g}{2\tau_g}\|\nabla A_{\theta}^k(\overline{x}_{K})\|^2$, we obtain linear convergence of Algorithm \ref{alg:extragrad} by the gradient norm. It requires
    \begin{align*}
        \mathcal{O}\left(\sqrt{\theta\delta_g}\log\frac{1}{\varepsilon}\right)\text{ rounds over only $M_g$}
    \end{align*}
    to converge to an arbitrary $\varepsilon$-solution.
\end{proof}

\end{appendixpart}
\end{document}